\theoremstyle{definition}
\newtheorem{definition}{Definition}
\theoremstyle{plain}
\newtheorem{theorem}[definition]{Theorem}
\newtheorem{proposition}[definition]{Proposition}
\newtheorem{lemma}[definition]{Lemma}
\newtheorem{conjecture}[definition]{Conjecture}
\theoremstyle{remark}
\newtheorem{remark}[definition]{Remark}
\DeclareMathOperator{\Ass}{Ass}
\DeclareMathOperator{\reg}{reg}
\def\field{\mathbb{K}}
\def\PP{\mathbb{P}}
\def\ZZ{\mathbb{Z}}
\def\seqmul{\overline{m}}
\let\to\longrightarrow
\let\un\underline
\begin{document}

\title{Containments of symbolic powers of ideals of generic points in $\PP^3$}

\author{Marcin Dumnicki}

\dedicatory{
Institute of Mathematics, Jagiellonian University, \\
ul. \L{}ojasiewicza 6, 30-348 Krak\'ow, Poland \\
Email address: Marcin.Dumnicki@im.uj.edu.pl\\
}

\thanks{Keywords: symbolic powers, fat points.}

\subjclass{14H50; 13P10}

\begin{abstract}
We show that the Conjecture of Harbourne and Huneke, $I^{(Nr-(N-1))} \subset M^{(r-1)(N-1)}I^{r}$
holds for ideals of generic (simple) points in $\PP^3$. As a result, for such ideals we prove the following
bounds, which can be recognized as generalizations of Chudnovsky bounds: $\alpha(I^{(3m-k)}) \geq m\alpha(I)+2m-k$,
for any $m \geq 1$ and $k=0,1,2$. Moreover, we obtain lower bounds for the Waldshmidt
constant for such ideals.
\end{abstract}

\maketitle

\section{Introduction}

Let $\field$ be a field of chracteristic zero, let $\field[\PP^N]=\field[x_0,\dots,x_N]$ denote the ring of coordinates of
the projective space with standard grading. Let $I \subset \field[\PP^N]$ be a homogeneous ideal.
By $m$-th symbolic power we define
$$
I^{(m)} = \field[\PP^N] \cap \big( \bigcap_{\mathfrak{p} \in \Ass(I)} I^{m} \field[\PP^N]_{\mathfrak{p}} \big),
$$
where the intersection is taken in the ring of fractions of $\field[\PP^N]$.
By a \emph{fat points ideal} we denote the ideal
$$I = \bigcap_{j=1}^{n} \mathfrak{m}_{p_j}^{m_j},$$
where $\mathfrak{m}_{p}$ denotes the ideal of forms vanishing at a point $p \in \PP^N$, and $m_j \geq 1$ are integers.
Observe that for a fat points ideal $I$ as above
$$I^{(m)} = \bigcap_{j=1}^{n} \mathfrak{m}_{p_j}^{m_j m}.$$
Let $M=(x_0,\dots,x_N) \subset \field[\PP^N]$ be the maximal homogeneous ideal.

A sequence $\seqmul=(m_1,\dots,m_n)$ of $n$ integers will be called a \emph{sequence of multiplicities}.
Define the \emph{ideal of generic fat points} in $\field[\PP^N]$ to be
$$I(\seqmul) = I(m_1,\dots,m_n) = \bigcap_{j=1}^{n} \mathfrak{m}_{p_j}^{m_j}$$
for points $p_1,\dots,p_n$ in general position in $\PP^N$ (for $m_j < 0$ we take $\mathfrak{m}_{p_j}^{m_j} = \field[\PP^N]$).
We will use the following notation:
$$m^{\times s} = \underbrace{(m,\dots,m)}_{s}.$$
If all multiplicities are equal to one, we say that $I$ is a \emph{simple points ideal}.

In \cite{Laz} it is shown that $I^{(Nr)} \subset I^{r}$ for any radical homogeneous ideal $I \subset \field[\PP^N]$.
This result has been improved in a sequence of papers by \cite{BoHa1}, \cite{primer}, \cite{HaHu} and the author to the following:

\begin{theorem}
Let $I$ be an ideal of a finite number of points in general position in $\PP^N$. Then
\begin{itemize}
\item
$I^{(3)} \subset I^2$ for $N=2$ (\cite{BoHa1}),
\item
$I^{(2r-1)} \subset I^r$ for $N=2$ (\cite{primer}),
\item
$I^{(2r)} \subset M^{r}I^{r}$ for $N=2$ (\cite{HaHu}),
\item
$I^{(3r)} \subset M^{2r}I^{r}$ for $N=3$ (\cite{Dum}),
\item
$I^{(2r-1)} \subset M^{r-1}I^{r}$ for $N=2$ (\cite{HaHu}),
\item
$I^{(Nr-(N-1))} \subset I^{r}$, where the number of points is sufficiently big (depending on $N$) (\cite{primer}).
\end{itemize}
\end{theorem}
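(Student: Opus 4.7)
The plan is to treat the six containments within a common framework and handle each by specialization plus degree bookkeeping. The framework rests on three observations: first, for a generic-points ideal $I$ the equality $I^{(m)} = \bigcap_j \mathfrak{m}_{p_j}^{mm_j}$ means every ideal appearing in the bullets is itself a fat-points ideal at the same $p_j$, so the question reduces entirely to fat-points Hilbert-function arguments; second, whether a given homogeneous form $F$ lies in $I^r$ or in $M^a I^r$ is decided degree-by-degree, so it suffices to compare Hilbert functions in each graded piece; third, by semicontinuity of Hilbert functions one is free to degenerate the $p_j$ to a convenient special configuration, prove the containment there, and lift back to the generic case.

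For the bullets in $\PP^2$ the engine is a Chudnovsky-type lower bound $\alpha(I^{(m)}) \geq \tfrac{m+1}{2}\alpha(I)$, which is known unconditionally for generic points in the plane. From this, $I^{(3)} \subset I^2$ of \cite{BoHa1} follows by taking $F \in I^{(3)}$ of minimal degree, carving out a curve $G \in I$ of degree $\alpha(I)$, and using B\'ezout on $\PP^2$ to conclude that the residual $F/G$ must still vanish at each $p_j$, hence lies in $I$. The refinements $I^{(2r-1)} \subset I^r$ and $I^{(2r)} \subset M^r I^r$ of \cite{primer,HaHu} run an induction on $r$ with the same carving strategy, the extra $M^r$-factor in the latter arising from tracking the excess of $\deg F$ over the minimum forced by the $p_j$. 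The refined bound $I^{(2r-1)} \subset M^{r-1} I^r$ is obtained by the same bookkeeping applied to one extra generic point.

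For $\PP^3$ the strategy is identical but requires a three-dimensional Chudnovsky-type bound $\alpha(I^{(m)}) \geq \tfrac{m+2}{3}\alpha(I)$, and then one carves with a cubic surface rather than a plane conic. Once the bound is available, $I^{(3r)} \subset M^{2r}I^r$ of \cite{Dum} follows by the same carving-plus-residuation argument as in $\PP^2$. Finally, the general-$N$ bullet $I^{(Nr-(N-1))} \subset I^r$ of \cite{primer} is the easiest consequence of the Harbourne--Huneke type containment once the number of points is large: the point count is chosen precisely so that any form of the relevant degree automatically vanishes to the order that would otherwise be extracted by the $M^{(r-1)(N-1)}$ factor.

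The main obstacle in this program is the initial-degree bound in dimension three, since the rich planar geometry (B\'ezout, Cremona, dual-Hesse arrangements) is no longer directly available. The standard workaround, and the technical heart of \cite{Dum}, is to specialize the $n$ generic points in $\PP^3$ to the union of a hyperplane and a small residual set, invoke the sharper $\PP^2$-bound on the hyperplane, and then propagate the bound off the hyperplane by an unloading argument on the blowup. Each of the remaining bullets is then a corollary of the corresponding degree estimate combined with the carving/residuation template described above.
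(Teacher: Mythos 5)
This theorem is a survey of known results; the paper gives no proof of any of the six bullets, only citations to \cite{BoHa1}, \cite{primer}, \cite{HaHu}, and \cite{Dum}. So there is no ``paper's own proof'' to compare against. Taking your sketch on its own terms, though, it has several concrete gaps.

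First, the B\'ezout/carving argument for $I^{(3)}\subset I^2$ in $\PP^2$ does not go through as stated. Taking $F\in I^{(3)}$ of minimal degree and $G\in I$ of degree $\alpha(I)$, B\'ezout may force $F$ and $G$ to share a common component, but it does not give $G\mid F$, and ``the residual $F/G$'' is only defined when $G$ actually divides $F$. Even if one patches this by replacing $G$ with the relevant common factor, the argument only addresses the minimal-degree element of $I^{(3)}$; the containment is a statement about \emph{every} graded piece, and higher-degree elements need a separate argument (the actual proof in \cite{BoHa1} runs through resurgence and regularity estimates, not curve-by-curve carving). The same objection applies to each of your ``carving-plus-residuation'' steps.

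Second, the logical direction in $\PP^3$ is reversed. You propose to deduce $I^{(3r)}\subset M^{2r}I^r$ from a three-dimensional Chudnovsky bound $\alpha(I^{(m)})\geq \frac{m+2}{3}\alpha(I)$, but for generic points in $\PP^3$ that bound is \emph{obtained from} the containment in \cite{Dum} (this is exactly what the Remark after Theorem \ref{thm1} says). Treating the bound as an available input and the containment as the output is circular relative to the literature you are citing. The actual route in \cite{Dum}, and in this paper's own later propositions, goes through Castelnuovo--Mumford regularity ($\reg(I)\leq s-1$, $I$ generated in degrees $\leq s-1$, plus the criterion of \cite[Lemma 2.3.4]{BoHa1} and \cite[Lemma 4.6]{HaHu}) together with Cremona-transformation estimates for $\alpha(I^{(m)})$ --- not a surface-carving residuation, which has no clean analogue of planar B\'ezout anyway.

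Third, the last bullet $I^{(Nr-(N-1))}\subset I^r$ for many points is not ``the easiest consequence of the Harbourne--Huneke type containment'': that containment is precisely Conjecture \ref{mainconj}, open in general $N$, so you cannot invoke it as a known input. The result in \cite{primer} is proved by different means (regularity and postulation estimates for large $n$). In short, your framework is a reasonable heuristic picture, but as a proof it conflates conjectural inputs with established ones and leaves the central residuation step unjustified.
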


It is also conjectured that

\begin{conjecture}
\label{mainconj}
Let $I$ be a simple points ideal in $\PP^N$. Then
\begin{itemize}
\item
$I^{(Nr-(N-1))} \subset I^{r}$ (Conjecture 4.1 in \cite{HaHu}),
\item
$I^{(Nr-(N-1))} \subset M^{(r-1)(N-1)}I^{r}$ (Conjecture 4.5 in \cite{HaHu})
\end{itemize}
for all $r \geq 1$.
\end{conjecture}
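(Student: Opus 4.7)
The plan is to prove the stronger second containment $I^{(Nr-(N-1))} \subset M^{(r-1)(N-1)}I^{r}$ directly, since it implies the first (taking the augmentation $M \to \field$, or simply noting $M^{(r-1)(N-1)} \subset \field[\PP^N]$). My approach would combine induction on $r$, a semicontinuity reduction, and a Chudnovsky-type lower bound on the initial degree.

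The base case $r=1$ is immediate: the claim becomes $I^{(1)} \subset I$, which holds because $I$ is radical. For the inductive step, suppose $f \in I^{(Nr-(N-1))}$, so $f$ vanishes to order at least $Nr-(N-1)$ at each of the generic points $p_1,\ldots,p_n$. The first reduction is semicontinuity: since the Hilbert function of a simple points ideal drops under specialization, it suffices to verify the containment on a single, carefully chosen configuration (e.g.\ a limit configuration in which the points line up along a curve of small degree, or a monomial-like placement inherited from an embedding $\field[\PP^N] \hookrightarrow \field[\PP^1]^{\otimes N}$). This reduces the problem to a combinatorial containment between truncated vector spaces of polynomials with prescribed vanishing orders at specific points.

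Next, I would apply the Ein--Lazarsfeld--Smith / Hochster--Huneke asymptotic multiplier-ideal machinery (or tight closure in characteristic $p$) to recover, for free, the weaker containment $I^{(Nr)} \subset I^r$. The task then reduces to squeezing out the extra $N-1$ units of symbolic power and the $M^{(r-1)(N-1)}$ factor using the genericity of the points. The key quantitative tool here is a Chudnovsky-type bound of the form $\alpha(I^{(m)}) \geq (m\alpha(I)+N-1)/N$, which for generic points is known in a range of cases and which would force $\deg f \geq \alpha(I^{r}) + (r-1)(N-1)$. Such a degree excess is exactly what is needed to detect membership in $M^{(r-1)(N-1)}$, provided one can produce an actual factorization rather than only a degree inequality.

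The main obstacle is precisely this last passage from a degree estimate to a genuine product decomposition $f = \sum g_i h_i$ with $g_i \in M^{(r-1)(N-1)}$ and $h_i \in I^r$. A natural mechanism is to fix a general linear form $\ell$, restrict $f$ to the hyperplane $V(\ell) \cong \PP^{N-1}$, apply the inductive hypothesis in one dimension less to the restricted fat-points ideal on that hyperplane, and then lift via the short exact sequence $0 \to I^{r-1}(-1) \overset{\ell}{\to} I^{r} \to I^{r}|_{V(\ell)} \to 0$ to peel off one factor of $\ell \in M$ at a time, iterating $(r-1)(N-1)$ times. The delicate point is that the restriction of a generic fat-points ideal to a general hyperplane is \emph{not} a generic fat-points ideal on that hyperplane---the restricted multiplicities and the resulting vanishing conditions may degenerate---so controlling the inductive loop requires either a strengthened inductive hypothesis (perhaps tracking a range of allowable configurations) or a direct cohomological vanishing on $\PP^N$ that bypasses the naive hyperplane restriction. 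This is where a genuinely new input beyond the $\PP^2$ and $\PP^3$ arguments is needed to close the induction in arbitrary dimension.
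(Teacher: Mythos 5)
The statement you are trying to prove is stated in the paper as a \emph{conjecture} (Conjectures 4.1 and 4.5 of Harbourne--Huneke); the paper does not prove it in general, and only establishes the case $N=3$ for points in general position (Theorem \ref{thm1}), via regularity bounds, Waldschmidt constants and explicit Cremona transformations. Your proposal aims at the general case and, as you yourself concede in the last sentence, does not close: it is a programme with several steps that are either circular or unavailable, not a proof.

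Concretely: (1) The semicontinuity reduction goes the wrong way. Specializing the points enlarges $(I^{(m)})_t$ but also changes $(M^kI^r)_t$, and the containment $I^{(Nr-(N-1))}\subset I^r$ is not a closed condition that can be checked on a degenerate configuration; the conjecture is precisely sensitive to the position of the points, which is why the paper works with \emph{generic} points throughout and never specializes. (2) The Ein--Lazarsfeld--Smith machinery yields $I^{(Nr)}\subset I^r$, and ``squeezing out the extra $N-1$'' is exactly the content of the conjecture --- it is not a refinement one gets for free from genericity; no mechanism for it is given. (3) The Chudnovsky-type bound $\alpha(I^{(m)})\geq (m\alpha(I)+N-1)/N$ you invoke is itself open for $N\geq 3$ in general (the paper proves weaker, explicit lower bounds on $\alpha(I^{(3r-2)})$ by hand in $\PP^3$). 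Moreover, passing from a degree inequality to membership in $M^kI^r$ requires already knowing $I^{(m)}\subset I^r$ \emph{and} a bound on the degrees of generators of $I$ (this is the content of \cite[Lemma 4.6]{HaHu}, used in Proposition \ref{gamma}); your sketch treats this as a technicality but it is where the regularity estimate $\reg(I)\leq s-1$ enters essentially. (4) The hyperplane-restriction induction cannot work as described: a general hyperplane misses all the points, so the restriction of a fat-points ideal to $V(\ell)$ is not a fat-points ideal on $\PP^{N-1}$ (it is the unit ideal in large degree), and the exact sequence you write is also incorrect (multiplication by $\ell$ maps $I^r(-1)$ into $I^r$, not $I^{r-1}(-1)$). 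The paper's actual route --- reducing to the single inequality $\alpha(I^{(3r-2)})\geq (s+1)r-2$ where $\binom{s}{3}<n\leq\binom{s+1}{3}$, and verifying it case by case via Cremona transformations, known postulation results for quartuple points, and direct computation for small $n$ --- is entirely different and is specific to $\PP^3$.
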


In the paper we show that Conjecture \ref{mainconj} holds for any number of generic points in $\PP^3$.

\begin{theorem}
\label{thm1}
Let $I$ be an ideal of simple points in general position in $\PP^3$. Then $I^{(3r-2)} \subset M^{2r-2}I^r$ for any $r \geq 1$, thus also
$I^{(3r-2)} \subset I^r$ for any $r \geq 1$. Additionally, we have $I^{(3r-1)} \subset M^{2r-1}I^r$ for any $r \geq 1$, which,
together with the main result in \cite{Dum} ($I^{(3r)} \subset M^{2r}I^r$) completes the picture. 
\end{theorem}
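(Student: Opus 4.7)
The plan is to adapt the framework of \cite{Dum} (where $I^{(3r)} \subset M^{2r}I^{r}$ was established) to the two remaining cases $m = 3r-1$ and $m = 3r-2$. The first step is to convert each containment $I^{(m)} \subset M^{c}I^{r}$ into a vanishing statement for linear systems of forms in $\PP^{3}$ with prescribed multiplicities at $n$ general points, in the spirit of the criteria of \cite{HaHu} and \cite{primer}: if the initial degree $\alpha(I^{(m)})$ is sufficiently large compared with $\alpha(I^{r})$, and $M^{c}I^{r}$ agrees with $I^{(m)}$ above an appropriate Castelnuovo--Mumford regularity threshold, the containment follows. This reduces the algebraic problem to a numerical question about generic fat point schemes.

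With the reduction in place, the second step is a degeneration argument. I would specialize one of the $n$ general points onto a hyperplane $H \subset \PP^{3}$ and use the standard exact sequence of ideal sheaves to split the linear system into a residual part in $\PP^{3}$ and its restriction to $H \cong \PP^{2}$. On the $\PP^{2}$ side the Harbourne--Huneke containment $I^{(2r-1)} \subset M^{r-1}I^{r}$ from \cite{HaHu} is available; on the $\PP^{3}$ side one proceeds by induction on $n$, or equivalently on $r$, invoking the main theorem of \cite{Dum} as the $m=3r$ case to close the loop. The base of the induction, $r=1$, reduces to $I^{(1)} \subset I$ (trivial) together with $I^{(2)} \subset MI$, which for $n \geq 2$ simple generic points in $\PP^{3}$ follows from the elementary bound $\alpha(I^{(2)}) > \alpha(I)$.

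The main obstacle, as in \cite{Dum}, is the combinatorial bookkeeping required to verify that after specialization the residual and restricted linear systems have the expected dimensions. For each of the two new cases $m=3r-1$ and $m=3r-2$, one must separately match the numerical invariants (degree, order of vanishing, number of points) so that the restricted system on $H$ falls within the already-known $\PP^{2}$ containments while the residual system on $\PP^{3}$ falls within the scope of the inductive hypothesis. Carrying out this diagram-style analysis, with the specialization tailored to the residue of $n$ modulo a small integer, is expected to be the most delicate part of the argument.
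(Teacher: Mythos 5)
Your first step---reducing the containment $I^{(m)}\subset M^{c}I^{r}$ to a numerical bound on $\alpha(I^{(m)})$ via a regularity argument---matches the paper's Proposition~\ref{gamma}, which does exactly this using \cite[Lemma 2.3.4]{BoHa1} and a variant of \cite[Lemma 4.6]{HaHu}. From that point on, however, your plan and the paper's proof diverge completely, and the degeneration route you sketch has a gap that I do not see how to close.

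The paper never specializes a point onto a hyperplane or uses the residual/trace exact sequence. Instead it establishes the required vanishings $I((3r-2)^{\times n})_{t}=0$ by a combination of (i) explicit Cremona transformations in $\PP^3$ (Propositions~\ref{cremona}, \ref{cre8}, \ref{3crseq}), (ii) a ``gluing'' result (Proposition~\ref{glue}) that bounds $\alpha$ for many points in terms of fewer points with higher multiplicity, (iii) lower bounds on the Waldschmidt constant $\gamma(n)$ deduced from these tools (Propositions~\ref{vargamma}, \ref{nicegamma}, \ref{gamma512}), and (iv) Gr\"obner-basis computations in Singular plus an explicit monomial argument for $n\le 4$ (Lemma~\ref{singular}, Propositions~\ref{gener}, \ref{case5}). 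That machinery is quite different from the residual/trace induction you propose.

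As for your plan: the arithmetic of specialization does not line up the way you need. If a point of $I^{(3r-2)}$ (multiplicity $3r-2$) is degenerated onto $H\cong\PP^2$, its trace on $H$ still carries multiplicity $3r-2$, not $2r-1$, so the Harbourne--Huneke containment $I^{(2r-1)}\subset M^{r-1}I^{r}$ in $\PP^2$ does not apply to the traced scheme. Symmetrically, the residual in $\PP^3$ drops that one multiplicity to $3r-3$ while the others stay at $3r-2$, producing a non-uniform fat point scheme that is neither an $I^{(3r)}$ nor an $I^{(3r-2)}$ of simple points, so invoking \cite{Dum} ``to close the loop'' is not justified. You would have to formulate and prove an induction for mixed multiplicities, and you have given no indication that the invariants actually close up; this is precisely where the combinatorial bookkeeping you flag as ``delicate'' is in fact broken as stated. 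Your base case observation (that $I^{(2)}\subset MI$ follows from Euler's relation applied to the partials) is fine, but it does not repair the induction step.

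In short: the numerical reduction is right and parallels the paper, but the proposed degeneration step does not produce schemes to which either the $\PP^2$ containments or the inductive hypothesis applies. You would need to replace it with something like the paper's Cremona/Waldschmidt-constant analysis, or substantially re-engineer the specialization so that traced and residual multiplicities land on known cases.
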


\begin{proof}
The proof of the theorem will be divided into six separated cases, depending on the number of points $n$ and $r$. Proposition
\ref{case1} deals with $n \geq 512$, $r \geq 3$, Proposition \ref{case2} with $5 \leq n \leq 511$, $r \geq 3$, Proposition \ref{case3}
with $n\geq 65$, $r=2$, Proposition \ref{case4} with $7 \leq n \leq 64$, $r=2$, Lemma \ref{singular} with $n=5,6$, $r=2$, and
Proposition \ref{case5} with $n \leq 4$ and arbitrary $r$. The case $r=1$ is trivial.
\end{proof}

For a homogeneous ideal $I \subset \field[\PP^N]$ define
$$I_t = \{ f \in I : \deg(f) = t \},$$
and
$$\alpha(I) = \min \{ t \geq 0 : I_t \neq 0 \}.$$

\begin{remark}
Chudnovsky \cite{Ch} conjectured that for an ideal of points in $\PP^3$ the following bound holds:
$$\alpha(I^{(r)}) \geq r \frac{\alpha(I)+2}{3}.$$
This bound follows from the containment $I^{(3r)} \subset M^{2r}I^r$ (see \cite{HaHu}), which has been proved in \cite{Dum} for
ideals of simple points in general position. Moreover, this containment obviously gives
\begin{equation}
\label{knownbound}
\alpha(I^{(3r)}) \geq r\alpha(I)+2r.
\end{equation}
Observe that the containment $I^{(3r-k)} \subset M^{2r-k}I^r$ gives the following bound for $\alpha(I^{(3r-k)})$, which can be treated as the generalization
of the Chudnovsky Conjecture and fits perfectly with \eqref{knownbound}:
$$\alpha(I^{(3r-k)}) \geq r\alpha(I)+2r-k.$$
Observe also that the containment $I^{(3r-3)} \subset M^{2r-3}I^r$ does not hold for $r=1$ (in fact we must define $M^{-1}$ in this case), so we focus on
the case $I^{(3r-k)} \subset M^{2r-k}I^r$ for $k=0,1,2$ and $r \geq 1$. There are also examples when the containment $I^{(2r)} \subset I^r$ does not hold
(see Corollary 1.1.1 in \cite{BoHa1}).
\end{remark}

We need to introduce the Waldschmidt constant of an ideal (cf. \cite{HaHu}).

\begin{proposition}
\label{Wc}
Let $I \subset K[\PP^{N}]$ be a non-zero homogeneous ideal. Define the \emph{Waldschmidt constant}
$$\gamma(I) = \lim_{m \to \infty} \frac{\alpha(I^{(m)})}{m}.$$
Then the following holds:
\begin{enumerate}
\item
$\gamma(I)$ exists and satisfies $\alpha(I^{(m)}) \geq m\gamma(I)$ for all $m \geq 1$,
\item
if $I \subset J$ are ideals of finite number of points then $\gamma(I) \geq \gamma(J)$,
\item
If $I$ is an ideal of fat points then $\gamma(I^{(r)}) = r\gamma(I)$.
\end{enumerate}
\end{proposition}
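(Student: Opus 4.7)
The plan is to deduce all three claims from a single elementary fact: the sequence $a_m := \alpha(I^{(m)})$ is \emph{subadditive}, i.e., $a_{p+q} \leq a_p + a_q$ for all $p,q \geq 1$. To establish this I would take nonzero homogeneous forms $f \in (I^{(p)})_{a_p}$ and $g \in (I^{(q)})_{a_q}$ and verify that $fg \in I^{(p+q)}$. Unwinding the definition
$$I^{(m)} = \field[\PP^N] \cap \bigcap_{\mathfrak{p} \in \Ass(I)} I^m\field[\PP^N]_{\mathfrak{p}},$$
one has $f \in I^p\field[\PP^N]_{\mathfrak{p}}$ and $g \in I^q\field[\PP^N]_{\mathfrak{p}}$ at every associated prime $\mathfrak{p}$, whence $fg \in I^{p+q}\field[\PP^N]_{\mathfrak{p}}$. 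Since $\deg(fg) = a_p + a_q$, this gives $a_{p+q} \leq a_p + a_q$.

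Part (1) is then immediate from Fekete's lemma: for any nonnegative subadditive sequence $(a_m)$, the limit $\lim_m a_m/m$ exists and equals $\inf_m a_m/m$. This simultaneously produces $\gamma(I)$ and the inequality $a_m/m \geq \gamma(I)$ for every $m$, which rearranges to $\alpha(I^{(m)}) \geq m\gamma(I)$.

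For part (2) I would note that an inclusion $I \subset J$ of ideals of finite point sets forces the underlying support of $J$ to be contained in that of $I$, so every form vanishing to the prescribed orders at the points cut out by $I$ automatically satisfies the weaker conditions imposed by $J$; equivalently $I^{(m)} \subset J^{(m)}$ for every $m$. Consequently $\alpha(I^{(m)}) \geq \alpha(J^{(m)})$, and passing to the limit yields $\gamma(I) \geq \gamma(J)$.

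For part (3), the intersection description gives $I^{(r)} = \bigcap_j \mathfrak{m}_{p_j}^{rm_j}$ and, iterating, $(I^{(r)})^{(m)} = I^{(rm)}$, so
$$\gamma(I^{(r)}) = \lim_{m \to \infty} \frac{\alpha(I^{(rm)})}{m} = r \cdot \lim_{m \to \infty} \frac{\alpha(I^{(rm)})}{rm} = r\gamma(I),$$
the last equality being legitimate because part (1) already guarantees the existence of the full limit, so any subsequential limit coincides with it. The only step requiring genuine care is verifying subadditivity, where one must pass cleanly through the intersection-of-localizations definition; the remainder of the argument is purely formal.
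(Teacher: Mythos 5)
Your proof is correct. The paper establishes this proposition only by citation, to \cite[Proposition 7]{Dum} and \cite[Lemma 2.3.1]{BoHa1}; the subadditivity-plus-Fekete argument you give is the standard one and is precisely what those references supply, so you have effectively filled in the proof the paper omits. One small remark on part (2): for fat points ideals $I=\bigcap_j \mathfrak{m}_{p_j}^{m_j}\subset J=\bigcap_i \mathfrak{m}_{q_i}^{n_i}$, the containment $I\subset J$ forces not only the support inclusion $\{q_i\}\subset\{p_j\}$ that you mention, but also that $n_i\leq m_j$ whenever $q_i=p_j$; it is this multiplicity comparison, not merely containment of supports, that makes the ``conditions imposed by $J$'' weaker and yields $I^{(m)}\subset J^{(m)}$. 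You assert this without verifying it, though the claim is true and is immediate in the radical (simple-points) case, which is all the paper ultimately uses. Parts (1) and (3) are argued correctly and cleanly; in particular the identification $(I^{(r)})^{(m)}=I^{(rm)}$ for fat points and the observation that existence of the full limit from (1) justifies passing to the subsequence $rm$ are both exactly the right points to make.
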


\begin{proof}
See \cite[Proposition 7]{Dum} and \cite[Lemma 2.3.1]{BoHa1}.
\end{proof}

Define $\gamma(N,n) = \gamma(I)$, where $I$ is the ideal of $n$ simple points in general position in $\PP^N$. The conjectural value for
$\gamma(N,n)$ is $\sqrt[N]{n}$ for $n$ big enough. Anyway, we always have the inequality $\gamma(N,n) \leq \sqrt[N]{n}$ (see \cite{dhst}).
Therefore, for $n$ big, we are interested in lower bounds for $\gamma(N,n)$. Since we deal with the case $N=3$ we will write
$\gamma(n)=\gamma(3,n)$. As a by-product we will show the following
$$\gamma(n) \geq 0.7787 \sqrt[3]{n} + 0.6142, \qquad \text{ for } n \geq 512.$$

\section{Bounds for Waldschmit constant}

\begin{proposition}
\label{cremona}
Let $\seqmul$ be a sequence of multiplicities. If 
$$I(m_1,m_2,m_3,m_4,\seqmul)_{t} \neq 0$$
then
$$I(m_1+k,m_2+k,m_3+k,m_4+k,\seqmul)_{t+k} \neq 0$$
for $k=2t-m_1-m_2-m_3-m_4$.
\end{proposition}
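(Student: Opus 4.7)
The plan is to exploit the standard cubic Cremona involution $\sigma \colon \PP^3 \dashrightarrow \PP^3$ based at the four points $p_1,\dots,p_4$. By genericity we may place these at the coordinate points and work with
$$\sigma(x_0:x_1:x_2:x_3) = (x_1 x_2 x_3 : x_0 x_2 x_3 : x_0 x_1 x_3 : x_0 x_1 x_2),$$
whose indeterminacy locus is supported on these four points together with the six lines joining them. Geometrically, on a suitable resolution $\sigma^{*}$ transports a class $tH - \sum_{i=1}^{4} m_i E_i$ to $(t+k)H - \sum_{i=1}^{4}(m_i+k)E_i$, with $k = 2t - m_1 - m_2 - m_3 - m_4$.

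Concretely, starting from a nonzero $F \in I(m_1,\dots,m_4,\seqmul)_t$, I would form
$$G(x) = \frac{F(\sigma(x))}{x_0^{m_1} x_1^{m_2} x_2^{m_3} x_3^{m_4}}.$$
To see that $F \circ \sigma$ is divisible by $x_0^{m_1}$, observe that the substitution sends a monomial $y_0^{i_0} y_1^{i_1} y_2^{i_2} y_3^{i_3}$ of $F$ to a multiple of $x_0^{i_1+i_2+i_3}$, and the assumption on the multiplicity of $F$ at $p_1$ forces $i_1+i_2+i_3 \geq m_1$. The three remaining factors $x_j^{m_{j+1}}$ arise from the analogous computations at $p_2,p_3,p_4$, and since the $x_i$ are pairwise coprime their product divides $F \circ \sigma$. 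Hence $G$ is a polynomial of degree $3t-(m_1+m_2+m_3+m_4) = t+k$, and $G \neq 0$ because $\sigma$ is dominant.

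The multiplicity bookkeeping at the four base points is the main technical step. Setting $x_0 = 1$, the contribution of a coefficient $c_{i_0 i_1 i_2 i_3}$ to $G$ becomes $c_{i_0 i_1 i_2 i_3}\, x_1^{t-i_1-m_2} x_2^{t-i_2-m_3} x_3^{t-i_3-m_4}$; the exponents are nonnegative by the multiplicity bounds at $p_2,p_3,p_4$, and the total order of vanishing of this monomial at the origin equals $2t+i_0-(m_2+m_3+m_4)$, which is at least $2t-(m_2+m_3+m_4) = m_1+k$ using only $i_0 \geq 0$. This yields multiplicity at least $m_1+k$ at $p_1$, and the analogous bounds at $p_2,p_3,p_4$ follow by permuting coordinates.

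For each $i \geq 5$, a generic $p_i$ lies outside the coordinate hyperplanes, so $\sigma$ is a local isomorphism near $p_i$ and the monomial denominator is a unit near $q_i := \sigma(p_i)$; hence $G$ has multiplicity $m_i$ at $q_i$. The map $(p_1,\dots,p_{n+4}) \longmapsto (p_1,\dots,p_4,\sigma(p_5),\dots,\sigma(p_{n+4}))$ is a birational self-map of $(\PP^3)^{n+4}$, so general position is preserved, and the existence of $G$ at such a generic image configuration yields $I(m_1+k,\dots,m_4+k,\seqmul)_{t+k} \neq 0$. The main obstacle will be keeping the four multiplicity constraints straight in the single expression for $G$; the possibility that some $m_i+k$ is negative is harmless, since then $\mathfrak{m}_{p_i}^{m_i+k}$ is the unit ideal by convention and the corresponding condition becomes vacuous.
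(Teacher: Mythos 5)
Your argument is correct and follows the same route as the paper, which simply delegates to \cite[Proposition 8]{Dum}: apply the standard cubic Cremona involution centred at four of the points, cancel the appropriate monomial factor, and verify the degree and multiplicity bookkeeping directly on monomials. The computations of the degree $3t-\sum_{i\le 4}m_i=t+k$, of the vanishing orders $\geq m_i+k$ at the four base points, and of the unchanged multiplicities at the images of the remaining points are all accurate, and your observation that negative entries $m_i$ or $m_i+k$ only turn a divisor into a multiplier (respectively render a condition vacuous) is exactly the ``combinatorial argument for negative multiplicities'' that the paper alludes to.
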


\begin{proof}
The proof involves using standard birational transformation for $\PP^3$, and some combinatorial arguments
to deal with the case of negative multiplicities. The full proof can be found in \cite{Dum} (see Proposition 8 there).
\end{proof}

The above operation on ideal and degree will be called ``Cremona operation''. Since we can permute multiplicities, Cremona operation can be performed
on any four of them. In such situations, we will indicate on which multiplicities (or on which points) it is performed.

\begin{proposition}
\label{cre8}
Let $m_1,\dots,m_r$, $t$ be integers.
If
$$ 4t > m_1+\ldots+m_8$$
then
$I(m_1,\dots,m_r)_{t}$ can be transformed by a sequence of Cremona operations into an ideal $I(m_1',\dots,m_r')_{t'}$ with arbitrarily large $t'$.
Moreover, these Cremona operations can be chosen to work only on $m_1,\dots,m_8$.
\end{proposition}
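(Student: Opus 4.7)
My plan is to exhibit a Cremona invariant on the first eight multiplicities and then to use it to force $t$ to grow by a uniformly positive amount at each step.

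First I would compute how the quantity $c := 4t - (m_1 + \cdots + m_8)$ transforms under a single Cremona operation applied to four of the first eight multiplicities. If the operation acts on $m_{i_1},\ldots,m_{i_4}$ with $\{i_1,\ldots,i_4\}\subset\{1,\ldots,8\}$, then $t\mapsto t+k$ and $m_{i_j}\mapsto m_{i_j}+k$ for $j=1,\ldots,4$, where $k=2t-(m_{i_1}+\cdots+m_{i_4})$. The four copies of $k$ added to the chosen multiplicities cancel against the $4k$ added to $4t$, so $c$ is invariant (provided the operation stays inside the first eight). By hypothesis $c>0$, and this invariance then guarantees $c>0$ throughout the iteration.

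Next I would make $t$ strictly increase at each step. After sorting so that $m_1\geq m_2\geq\cdots\geq m_8$, the four smallest satisfy $m_5+m_6+m_7+m_8\leq S/2$ where $S=m_1+\cdots+m_8$; applying the Cremona operation of Proposition \ref{cremona} to $m_5,\ldots,m_8$ gives
$$k \;=\; 2t-(m_5+m_6+m_7+m_8) \;\geq\; 2t - \tfrac{S}{2} \;=\; \tfrac{c}{2} \;>\; 0.$$
Thus a single step increases $t$ by at least $c/2$. Iterating---at each stage re-sorting and operating on the four smallest among the current first eight---produces after $n$ steps a degree that has grown by at least $nc/2$, which exceeds any prescribed target.

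The main point requiring care, rather than a genuine obstacle, is bookkeeping: the ``four smallest'' subset may change from step to step, and individual multiplicities can drift into negative values during the iteration. Proposition \ref{cremona} is already formulated to allow arbitrary integer multiplicities (with $\mathfrak{m}_p^m=\field[\PP^N]$ for $m<0$), so these sign changes are harmless. The positivity of $c$ is the one fact that must be preserved across iterations, and the invariance argument above does exactly that.
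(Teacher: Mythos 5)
Your proof is correct, and it takes a genuinely different (and cleaner) route than the paper's. The paper fixes a specific pattern of Cremona operations---alternately on positions $1,2,3,4$ and then $5,6,7,8$---sets up the resulting linear recurrences for the degree $T(n)$ and the partial sums $S_1(n)$, $S_2(n)$, and verifies explicit closed-form quadratic formulas for these, concluding from the leading coefficient $8t-2s_1-2s_2 = 2\bigl(4t - \sum_{j=1}^8 m_j\bigr) > 0$ that $T(n)\to\infty$. You instead isolate the quantity $c = 4t - \sum_{j=1}^8 m_j$ as an invariant of any Cremona operation supported on the first eight indices, and then make a greedy choice (always apply Cremona to the four smallest of the current first eight) to guarantee $k \geq c/2 > 0$ at every step, yielding a uniform linear lower bound on the growth of $t$. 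Both proofs hinge on the same positive quantity $c$, but where the paper verifies quadratic growth by explicit computation with a fixed operation schedule, you extract the structural reason (invariance of $c$) and need no closed formulas at all. The remark about bookkeeping of signs is well taken and correctly resolved by noting that Proposition~\ref{cremona} and the definition of $I(\seqmul)$ already accommodate negative multiplicities. This is a sound alternative and, if anything, a more transparent one.
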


\begin{proof}
Let $I(0) = I(m_1,\dots,m_r)_{t}$, define inductively
$I(n)$ to be the ideal obtained from $I(n-1)$ by performing Cremona on multiplicities number 1, 2, 3, 4 and again
Cremona on multiplicities number 5, 6, 7, 8. Let $T(n)$ be the degree of $I(n)$, let $S_1(n)$ be the sum of first four
multiplicities in $I(n)$, let $S_2(n)$ be the sum of the next four multiplicities. Denote also
$s_1 = m_1+m_2+m_3+m_4$, $s_2=m_5+m+6+m_7+m_8$.
Obviously $T(0)=t$, $S_1(0)=s_1$, $S_2(0)=s_2$. Perform Cremona on $I(n-1)$ on first four
multiplicities. New degree is equal to $3T(n-1)-S_1(n-1)$, and new sum of first four multiplicities is equal to
$8T(n-1)-3S_1(n-1)$. Performing another Cremona on other four multiplicities gives recurrence relations for
$T(n)$, $S_1(n)$ and $S_2(n)$:
\begin{align}
\label{recrel1}
T(n) & = 9T(n-1)-3S_1(n-1)-S_2(n-1) \\
\label{recrel2}
S_1(n) & = 8T(n-1)-3S_1(n-1) \\
\label{recrel3}
S_2(n) & = 24T(n-1) - 8S_1(n-1)-3S_2(n-1).
\end{align}
We claim that
\begin{align*}
T(n) & = (8n^2+1)t - (2n^2+n)s_1 - (2n^2-n)s_2 \\
S_1(n) & = (16n^2-8n)t - (4n^2-1)s_1 - (4n^2-4n)s_2 \\
S_2(n) & = (16n^2+8n)t - (4n^2+4n)s_1 - (4n^2-1)s_2.
\end{align*}
Indeed, it is easy to observe that these polynomials evaluated at 0 give $t$, $s_1$, $s_2$ resp., and (using
any computer algebra system) that they satisfy recurrence relations \eqref{recrel1}, \eqref{recrel2} and \eqref{recrel3}.
Since
$$T(n)=(8t-2s_1-2s_2)n^2 + \text{ lower terms}$$
and $8t-2s_1-2s_2 > 0$ by assumption we get that $T(n)$ can take arbitrarily large values for $n \gg 0$.
\end{proof}

\begin{proposition}
\label{3crseq}
Let $I(M_1(n),\dots,M_{10}(n))_{T(n)}$ be the ideal obtained from the ideal $I(0^{\times 10})_{1}$ by $n$ iterations
of the following operation: Cremona based on points number 1,2,3,4, then 1,5,6,7, then 1,8,9,10. Then
$\lim T(n) = \infty$ for $n \to \infty$.
\end{proposition}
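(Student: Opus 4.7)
The plan is to reduce the ten-variable evolution to a five-dimensional linear recurrence by exploiting the obvious symmetry, and then exhibit polynomial growth of the degree coordinate $T(n)$.

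\textbf{Symmetry reduction.} I would first check, by induction on $n$, that the state after $n$ iterations is invariant under arbitrary permutations within each of the three triples $\{2,3,4\}$, $\{5,6,7\}$, $\{8,9,10\}$: the initial state $(1;\,0^{\times 10})$ has this symmetry, and each of the three Cremona steps respects it, since such a step acts symmetrically on its three non-pivot points (and fixes the other six multiplicities). Thus after any number of iterations the state is captured by five integers $T(n)$, $A(n) := M_1(n)$, $B(n) := M_2(n) = M_3(n) = M_4(n)$, and analogously $C(n)$ on $\{5,6,7\}$ and $D(n)$ on $\{8,9,10\}$.

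\textbf{The recurrence.} Next, I would unpack Proposition \ref{cremona}: a Cremona on four points at degree $t$ with multiplicity-sum $\sigma$ sends $t \mapsto 3t-\sigma$ and adds $2t-\sigma$ to each of those four multiplicities, leaving the others fixed. Composing the three steps of one iteration (on $\{1,2,3,4\}$, then $\{1,5,6,7\}$, then $\{1,8,9,10\}$) and simplifying yields a linear recursion
$$\begin{pmatrix} T \\ A \\ B \\ C \\ D \end{pmatrix}(n+1) \;=\; \begin{pmatrix} 15 & -7 & -12 & -6 & -3 \\ 14 & -6 & -12 & -6 & -3 \\ 2 & -1 & -2 & 0 & 0 \\ 4 & -2 & -3 & -2 & 0 \\ 8 & -4 & -6 & -3 & -2 \end{pmatrix} \begin{pmatrix} T \\ A \\ B \\ C \\ D \end{pmatrix}(n),$$
and a direct calculation gives $(T,A,B,C,D)(1) = (15, 14, 2, 4, 8)$.

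\textbf{Closed form and growth.} Computing a few more iterates suggests the closed form
$$T(n) \;=\; \frac{27 n^2}{2} + \frac{5 - (-1)^n}{4},$$
together with analogous quadratic-plus-sign formulas for $A, B, C, D$. I would verify these by induction against the matrix recurrence above; this is a polynomial identity easily checked by hand or with any computer algebra system. Since the leading term of $T(n)$ is $\frac{27 n^2}{2}$, the desired conclusion $T(n) \to \infty$ follows.

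\textbf{Main obstacle.} The difficulty is purely bookkeeping: one must compose the three Cremona operations carefully and track all five coordinates, not just $T$. The only slightly non-obvious point is guessing the correct ansatz --- the growth is merely polynomial, because the $5 \times 5$ matrix has all eigenvalues of absolute value $1$, with a length-three Jordan block at $\lambda = 1$ producing the $n^2$ term and a simple eigenvalue at $\lambda = -1$ producing the period-two correction. In practice, writing down the formula and checking it by induction is quicker than working through the Jordan decomposition.
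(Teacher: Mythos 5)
Your proposal is correct and follows essentially the same route as the paper: exploit the symmetry within each triple to reduce to five coordinates, guess a closed-form quadratic expression for the degree, and verify it by induction (your transition matrix and the values $(T,A,B,C,D)(1)=(15,14,2,4,8)$ are accurate). The only cosmetic difference is that the paper groups \emph{two} iterations (six Cremona operations) into one step, which eliminates your $(-1)^n$ correction and yields the pure polynomial state $I(54n^2,(18n^2-6n)^{\times 3},(18n^2)^{\times 3},(18n^2+6n)^{\times 3})_{54n^2+1}$, but the idea and the verification-by-induction are identical.
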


\begin{proof}
For simplicity we change the notation and assume that $I_{0}=I(0,0^{\times 3},0^{\times 3},0^{\times 3})_{1}$, while
$I(n)$ comes from $I(n-1)$ by six Cremona operations --- based on points number 1,2,3,4, then 1,5,6,7, then 1,8,9,10,
then (again) 1,2,3,4, then 1,5,6,7, then 1,8,9,10.
We claim that
$$I(n) = I(54n^2,(18n^2-6n)^{\times 3},(18n^2)^{\times 3},(18n^2+6n)^{\times 3})_{54n^2+1}.$$
The straightforward inductive proof of this fact is left to the reader.
\end{proof}

\begin{proposition}
\label{glue}
Let $m_1,\dots,m_r,m_1',\dots,m_s'$, $t$, $k$ be integers.
If
$$I(m_1,\dots,m_r)_{k} = 0$$
and
$$I(m_1',\dots,m_s',k+1)_{t} = 0$$
then
$$I(m_1,\dots,m_r,m_1',\dots,m_s')_{t} = 0.$$
\end{proposition}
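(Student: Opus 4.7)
My plan is to argue by contradiction and to transport all the constraint at $p_1,\dots,p_r$ onto a single generic point by a flat-limit (``infinitely near points'') degeneration. Suppose for contradiction that $I(m_1,\dots,m_r,m_1',\dots,m_s')_t \neq 0$ for generic points. Since $\dim I(\cdot)_t$ is upper semicontinuous in the position of the points, the ideal stays nonzero under any flat specialization of the configuration. I specialize by letting $p_1,\dots,p_r$ tend simultaneously to a single generic point $q \in \PP^3$ along $r$ mutually generic tangent directions, while $p_1',\dots,p_s'$ stay generic in $\PP^3 \setminus \{q\}$, and pick any nonzero $f$ in the resulting limit ideal.

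Let $\pi \colon \tilde X \to \PP^3$ be the blow-up of $\PP^3$ at $q$, with exceptional divisor $E \cong \PP^2$, and set $\mu = \mathrm{ord}_q(f)$. Under the degeneration the coalesced points become $r$ generic points $\bar v_1,\dots,\bar v_r \in E$ (the chosen tangent directions), and the strict transform $\tilde f$ is a section of $\pi^*\mathcal{O}_{\PP^3}(t)\otimes\mathcal{O}_{\tilde X}(-\mu E)$ vanishing at each $\bar v_i$ with order at least $m_i$. Restricting $\tilde f$ to $E$, and using $\mathcal{O}_{\tilde X}(-\mu E)|_E \cong \mathcal{O}_{\PP^2}(\mu)$ together with the triviality of $\pi^*\mathcal{O}_{\PP^3}(t)|_E$, I obtain a nonzero form of degree $\mu$ on $\PP^2 = E$ vanishing at the generic points $\bar v_i$ with multiplicities $m_i$.

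Now apply the two hypotheses. The first hypothesis $I(m_1,\dots,m_r)_k = 0$ in $\PP^3$ forces the analogous vanishing on $\PP^2$: a form in $\field[x_0,x_1,x_2]_d$ can be regarded as a form in $\field[x_0,x_1,x_2,x_3]_d$ with the same multiplicities at points of $\{x_3 = 0\}$, and a generic configuration of $r$ points on a generic hyperplane is generic in $\PP^3$. Multiplying by a generic linear form propagates vanishing from degree $\mu$ to degree $k$ whenever $\mu \leq k$, so the nonzero form on $E$ produced above forces $\mu \geq k+1$. Hence $f$ vanishes at $q$ with order at least $k+1$ and at each $p_j'$ with order at least $m_j'$, yielding a nonzero element of $I(m_1',\dots,m_s',k+1)_t$ for the generic configuration $\{p_1',\dots,p_s',q\}$. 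This contradicts the second hypothesis and completes the proof.

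The main obstacle I anticipate is the semicontinuity/degeneration step: one has to realize the ``infinitely near'' limit as a flat specialization so that nonvanishing of $I(\cdot)_t$ actually transfers from the generic fibre to the degenerate one, and then check that after blowing up $q$ the limit scheme really is $r$ generic simple fat points $\bar v_i \in E$ carrying the original multiplicities $m_i$. This is the standard behaviour of collapsing points in a flat family together with upper semicontinuity of cohomology, but it deserves careful bookkeeping. Once this degeneration is in place, the rest of the argument is a routine blow-up and restriction computation.
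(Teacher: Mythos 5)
The paper's own proof is a one-line citation to \cite[Theorem~10]{Dumalg}, where the result is established by the combinatorial ``diagram-cutting'' method for monomial ideals of fat points, not by a geometric degeneration. Your blind attempt is a genuinely different, blow-up/collision approach, but it has two gaps that I do not see how to close.

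First, the flat-limit bookkeeping is not correct. When $p_1,\dots,p_r$ collide to $q$ along generic directions, the flat limit $Z_0$ of $\bigcup_i m_i p_i$ is a punctual scheme at $q$ whose structure, after blowing up $q$, is \emph{not} simply the union of fat points $m_i\bar v_i$ on the exceptional $\PP^2=E$. Already for $r=1$ the limit is $Z_0=m_1 q$; after the blow-up the total transform is $m_1E$, the strict transform is empty, and the leading form $\tilde f|_E$ of an arbitrary element $f\in I_{Z_0}$ of order exactly $\mu\ge m_1$ satisfies \emph{no} vanishing condition at $\bar v_1$. So the assertion that $\tilde f|_E$ vanishes at each $\bar v_i$ to order $m_i$ fails, and with it the derivation of any lower bound on $\mu$. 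For $r\ge 2$ the collision scheme is genuinely more intricate (its length exceeds that of any single fat point), and the constraints it imposes on the graded pieces of $f$ are spread across several infinitesimal levels; the Evain-type description of such collisions is much more delicate than ``$r$ fat points on $E$''.

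Second, even granting the blow-up step, the bridge from the $\PP^3$ hypothesis $I(m_1,\dots,m_r)_k=0$ to a vanishing statement in $\PP^2$ is not justified. You assert that ``a generic configuration of $r$ points on a generic hyperplane is generic in $\PP^3$''. This is false as soon as $r\ge 4$: four or more coplanar points are never in general position in $\PP^3$. Consequently, $I_{\PP^3}(m_1,\dots,m_r)_k=0$ for generic points does not, by semicontinuity (which goes the wrong way here) or by the hyperplane embedding, force $I_{\PP^2}(m_1,\dots,m_r)_\mu=0$ for $\mu\le k$. And the paper needs the proposition precisely for such $r$ (e.g.\ $r=5$ in the $n=12$ case of Proposition~\ref{vargamma}). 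Both gaps would have to be filled by a much more careful analysis of the collision limit and of the $\PP^3\!\to\!\PP^2$ comparison; as written, the argument does not establish the proposition.
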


\begin{proof}
\cite[Theorem 10]{Dumalg}.
\end{proof}

\begin{proposition}
\label{vargamma}
Define $b(n)$ for the following values on $n$:
$$
\begin{array}{c|cccccccccccccccc}
n & 1 & 2 & 3 & 4 & 5 & 6 & 7 & 8 & 12 & 14 & 16 & 17 & 21 & 24 & 30 & 35 \\ \hline
b(n) & 1 & 1 & 1 & \frac{4}{3} & \frac{5}{3} & \frac{12}{7} & \frac{28}{15} & 2 & \frac{126}{57} & \frac{7}{3} & \frac{22}{9} & \frac{5}{2} & \frac{8}{3} & \frac{107}{39} & 3 & \frac{67}{21}.
\end{array}
$$
Then $\gamma(n) \geq b(n)$ whenever $b(n)$ is defined.
\end{proposition}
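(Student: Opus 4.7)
The plan is to prove $\gamma(n) \geq b(n)$ for each tabulated $n$ by showing the equivalent statement: $I(m^{\times n})_t = 0$ whenever $t < m b(n)$ and $m \geq 1$. This suffices since Proposition \ref{Wc} guarantees $\gamma(n) = \inf_m \alpha(I(m^{\times n}))/m$, so $\gamma(n) \geq b(n)$ iff $\alpha(I(m^{\times n})) \geq m b(n)$ for every $m$. The contrapositive of Proposition \ref{cremona} will be the main engine: if a sequence of Cremona operations transforms the target ideal into one that is visibly zero (because the degree has become negative or some non-negative multiplicity exceeds the degree), then the original ideal also vanishes. The glue lemma (Proposition \ref{glue}) will be used to combine sub-configurations.

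The cases $n \leq 3$ are immediate, since any non-zero form vanishing to order $m$ at a point has degree at least $m$. For $n=4$, a single Cremona on all four points transforms $I(m^{\times 4})_t$ into $I((2t-3m)^{\times 4})_{3t-4m}$, whose degree is negative whenever $t < 4m/3$. For $n \in \{5,6,7,8\}$, I would iterate Cremona on well-chosen $4$-subsets. As an illustration, for $n=6$ the three Cremonas on $\{1,2,3,4\}$, $\{3,4,5,6\}$, and $\{1,2,5,6\}$ reduce the data to $I((4t-7m)^{\times 6})_{7t-12m}$, whose degree is negative precisely when $t < (12/7)m = m\, b(6)$. For $n=8$, the iterated scheme of Proposition \ref{cre8} applied alternately to $\{1,\dots,4\}$ and $\{5,\dots,8\}$ forces the degree to $-\infty$ whenever $t < 2m$, by the explicit recurrences derived there.

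For the larger values $n \in \{12,14,16,17,21,24,30,35\}$, longer Cremona sequences are required, often organized according to the $10$-point pattern of Proposition \ref{3crseq} or a variant built on a different ``anchor'' point set. The glue lemma becomes essential here: to prove vanishing for $n$ points, I would partition them into two groups, verify by iterated Cremona that the first group has $I(\dots)_k = 0$, and then reduce the remaining problem to showing that the second group together with a virtual point of multiplicity $k+1$ also vanishes in the appropriate degree, again by Cremona. The main obstacle is combinatorial: for each individual $n$ one must discover the correct Cremona sequence (and, where needed, the right splitting for gluing). Once found, the verifications reduce to linear arithmetic identities in $m$ and $t$ that can be checked by hand or by computer algebra. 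The unusual numerators and denominators in entries such as $b(12) = 126/57$ and $b(24) = 107/39$ precisely reflect the specific Cremona sequences that prove these bounds.
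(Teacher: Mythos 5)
Your strategy matches the paper's: fix each $n$, reduce $\gamma(n)\geq b(n)$ to the vanishing of an explicit linear system, use the contrapositive of Proposition~\ref{cremona} to push the degree below zero, and invoke Proposition~\ref{glue} to split off subconfigurations of points for the larger $n$. Your $n=6$ computation is, move for move, the paper's sequence (Cremonas on $\{1,2,3,4\}$, $\{3,4,5,6\}$, $\{1,2,5,6\}$ giving degree $7t-12m$), and your reading of the $n=8$ case is correct: although the stated hypothesis of Proposition~\ref{cre8} is $4t>m_1+\cdots+m_8$, the formula $T(n)=(8t-2s_1-2s_2)n^2+\cdots$ established in its proof is unconditional, so $T(n)\to-\infty$ whenever $4t<m_1+\cdots+m_8$. (The paper instead phrases $n=8$ by decomposing into the fixed vector $I(1^{\times 8})_2$ plus a free part $I(0^{\times 8})_{-1}$ and letting the free part run off to $-\infty$; both versions work.)

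The genuine gap is that for the eight values $n\in\{12,14,16,17,21,24,30,35\}$, which carry essentially all the content of the proposition, you supply no proof, only the assertion that suitable Cremona sequences and glue splittings ``can be found.'' These are the hard part and are not discoverable by a uniform recipe. The paper's proof for $n=12$ first glues off five points using $\gamma(5)\geq\frac{5}{3}$ and then runs an eight-row Cremona tableau on $I(95m,(57m)^{\times 7})_{126m-1}$, tracking the coefficient of $m$ and the constant (``free'') part separately; for $n=14$ and $n=17$ the essential observation is that one particular Cremona acts as the identity on the $m$-linear part ($k=0$), so the entire argument reduces to driving the free part $I(0^{\times 10})_{-1}$ to arbitrarily negative degree via the ten-point iteration of Proposition~\ref{3crseq}; for $n=16$, $21$, and $35$ the tableaux eventually stabilize so that only eight multiplicities change, and Proposition~\ref{cre8} is then applied to the free part. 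The odd-looking fractions you flag, such as $126/57$, $107/39$, and $67/21$, are precisely the fingerprints of those specific tableaux; there is no way to arrive at them except by exhibiting the moves (or performing an equivalent machine search). As written, your proposal identifies the correct tools but defers their hard combinatorial deployment, which is the actual proof.
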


\begin{proof}
We will consider each case separately. We note here that for $n \leq 8$ we have $\gamma(n)=b(n)$. Moreover, for $n \leq 8$ this equality
can be derived from much more general procedure of computing $\alpha(I^{(m)})$ for an ideal $I$ of at most 8 general points in $\PP^3$ and any $m\geq 1$ (see \cite{Laf}). 
\begin{itemize}
\item
$n=1,2,3$ is clear.
\item
$n=4$. Let $I$ be the ideal of 4 points in general position. If $(I^{(3m)})_{4m-1} \neq 0$ then, by Proposition \ref{cremona},
$I((-m-2)^{\times 4})_{-2} = \field[\PP^3]_{-2} \neq 0$, which is false. Hence $\alpha(I^{(3m)}) \geq 4m$ gives
$\gamma(I) \geq \frac{4}{3}$ by Proposition \ref{Wc}.
\item
$n=5$. Similarly, let $I$ be the ideal of 5 points in general position. If $(I^{(3m)})_{5m-1} \neq 0$ then, by Proposition \ref{cremona},
$I(3m,(-m-2)^{\times 4})_{3m-2} \neq 0$, which is false. Hence $\alpha(I^{(3m)}) \geq 5m$ gives
$\gamma(I) \geq \frac{4}{3}$.
\item
$n=6$. Consider $I((7m)^{\times 6})_{12m-1}$. We make the following Cremona transformations
$$
\begin{array}{c|cccccc|c}
t & m_1 & m_2 & m_3 & m_4 & m_5 & m_6 & k \\ \hline
12m-1 & \un{7m} & \un{7m} & \un{7m} & \un{7m} & 7m & 7m & -4m-2 \\
8m-3 & 3m-2 & 3m-2 & \un{3m-2} & \un{3m-2} & \un{7m} & \un{7m} & -4m-2 \\
4m-5 & \un{3m-2} & \un{3m-2} & & & \un{3m-2} & \un{3m-2} & -4m-2 \\
-7   & & & & & & &
\end{array}
$$
to show that this ideal is the zero ideal.
\item
$n=7$. Consider $I((15m)^{\times 7})_{28m-1}$. Again we make Cremona transformation. To make the presentation better, we first
show what happens with coefficients standing by $m$'s, and then what happens with free coefficients. In other words, we make several Cremona on
$I(15^{\times 7})_{28}$ obtaining $I((-1)^{\times 7})_{0}$, and then the same sequence of Cremona on $I(0^{\times 7})_{1}$ obtaining
$I((8)^{\times 4},4,8,8)_{15}$. By linearity, we obtain that $I((15m)^{\times 7})_{28m-1}$ can be transformed into
$I((-m-8)^{\times 6},-m-4)_{-15}$, which is the zero ideal.
$$
\begin{array}{c|ccccccc|c}
t & m_1 & m_2 & m_3 & m_4 & m_5 & m_6 & m_7 & k \\ \hline
28 & \un{15} & \un{15} & \un{15} & \un{15} & 15 & 15 & 15 & -4 \\
24 & \un{11} & 11 & 11 & 11 & \un{15} & \un{15} & \un{15} & -8 \\
16 & 3 & \un{11} & \un{11} & \un{11} & \un{7} & 7 & 7 & -8 \\
8 & \un{3} & \un{3} & 3 & 3 & -1 & \un{7} & \un{7} & -4 \\
4 & -1 & -1 & \un{3} & \un{3} & -1 & \un{3} & \un{3} & -4 \\
0 & -1 & -1 & -1 & -1 & -1 & -1 & -1
\end{array}
$$
$$
\begin{array}{c|ccccccc|c}
t & m_1 & m_2 & m_3 & m_4 & m_5 & m_6 & m_7 & k \\ \hline
1 & \un{0} & \un{0} & \un{0} & \un{0} & 0 & 0 & 0 & +2 \\
3 & \un{2} & 2 & 2 & 2 & \un{0} & \un{0} & \un{0} & +4 \\
7 & 6 & \un{2} & \un{2} & \un{2} & \un{4} & 4 & 4 & +4 \\
11 & \un{6} & \un{6} & 6 & 6 & 4 & \un{4} & \un{4} & +2 \\
13 & 8 & 8 & \un{6} & \un{6} & 4 & \un{6} & \un{6} & +2 \\
15 & 8 & 8 & 8 & 8 & 4 & 8 & 8 \\
\end{array}
$$
\item
$n=8$. Consider $I(m^{\times 8})_{2m-1}$. Observe that each Cremona on $I(1^{\times 8})_{2}$ gives the same ideal. Hence we are only
interested in free part $I(0^{\times 8})_{-1}$. By Proposition \ref{cre8} the ideal $I(0^{\times 8})_{1}$ can be transformed into
an ideal with sufficiently large degree (greater than $m$), hence $I(m^{\times 8})_{2m-1}$ can be transformed into $I(\dots)_{<0}$, which
completes the proof of this case.
\item
$n=12$. Consider $I((57m)^{\times 12})_{126m-1}$. Since $\gamma(5) \geq \frac{5}{3}$ we have, by Proposition \ref{Wc},
$\alpha(I((57m)^{\times 5})) \geq 95m$, hence
$$I((57m)^{\times 5})_{95m-1} = 0.$$
By Proposition \ref{glue} it suffices to show that $I(95m,(57m)^{\times 7})_{126m-1} = 0$.
Consider the following sequence of Cremona:
$$
\begin{array}{c|cccccccc|c}
t & m_1 & m_2 & m_3 & m_4 & m_5 & m_6 & m_7 & m_8 & k \\ \hline
126 & \un{95} & \un{57} & \un{57} & \un{57} & 57 & 57 & 57 & 57 & -14 \\
112 & \un{81} & 43 & 43 & 43 & \un{57} & \un{57} & \un{57} & 57 & -28 \\
84 & \un{53} & \un{43} & \un{43} & 43 & 29 & 29 & 29 & \un{57} & -28 \\
56 & 25 & 15 & 15 & \un{43} & \un{29} & \un{29} & \un{29} & 29 & -18 \\
38 & \un{25} & \un{15} & 15 & \un{25} & 11 & 11 & 11 & \un{29} & -18 \\
20 & 7 & & \un{15} & 7 & \un{11} & \un{11} & \un{11} & 11 & -8 \\
12 & \un{7} & & \un{7} & \un{7} & 3 & 3 & 3 & \un{11} & -8 \\
4 & & & & & \un{3} & \un{3} & \un{3} & \un{3} & -4 \\
0 & & & & & & & &
\end{array}
$$
Now perform the same sequence on the free part $I(0^{\times 8})_{-1}$. Observe that $I(0^{\times 8})_{1}$ is non-zero,
so after any sequence of Cremona it will give a non-zero ideal, hence with positive degree (degree equal 0 is excluded, since
$\dim_{\field} I_{0} \leq 0$ but $\dim_{\field} I(0^{\times 8})_{1} > 0$; we must know that Cremona preserves also the dimension over $\field$, but it is
immediate by adding simple points). Therefore free part will have negative degree, which completes the proof.
\item
$n=14$. Consider $I((3m)^{\times 14})_{7m-1}$.
It suffices to show that $I(5m,(3m)^{\times 9})_{7m-1} = 0$.
Observe that Cremona based on multiplicities $5,3,3,3$ applied to $I(5,3^{\times 9})_{7}$ gives exactly the same ideal,
since $k=2\cdot 7-5-3 \cdot 3 = 0$.
Thus everything relies on the free part $I(0^{\times 10})_{-1}$, which can have arbitrarily low degree according to Proposition \ref{3crseq}.
\item
$n=16$. Consider $I((9m)^{\times 16})_{22m-1}$.
It suffices to show that $I(18m,(9m)^{\times 8})_{22m-1} = 0$.
Consider the following sequence of Cremona:
$$
\begin{array}{c|ccccccccc|c}
t & m_1 & m_2 & m_3 & m_4 & m_5 & m_6 & m_7 & m_8 & m_9 & k \\ \hline
22 & \un{18} & \un{9} & \un{9} & \un{9} & 9 & 9 & 9 & 9 & 9 & -1 \\
21 & \un{17} & 8 & 8 & 8 & \un{9} & \un{9} & \un{9} & 9 & 9 & -2 \\
19 & \un{15} & \un{8} & 8 & 8 & 7 & 7 & 7 & \un{9} & \un{9} & -3 \\
16 & \un{12} & 5 & \un{8} & \un{8} & \un{7} & 7 & 7 & 6 & 6 & -3 \\
13 & \un{9} & 5 & 5 & 5 & 4 & \un{7} & \un{7} & \un{6} & 6 & -3 \\
10 & \un{6} & \un{5} & \un{5} & 5 & 4 & 4 & 4 & 3 & \un{6} & -2 \\
8 & \un{4} & 3 & 3 & \un{5} & \un{4} & \un{4} & 4 & 3 & 4 & -1 \\
7 & \un{3} & 3 & 3 & \un{4} & 3 & 3 & \un{4} & 3 & \un{4} & -1 \\
6 & 2 & 3 & 3 & 3 & 3 & 3 & 3 & 3 & 3
\end{array}
$$
Observe that further Cremona based on points other than one does not change the obtained ideal, so from this point only the free part will change.
Perform the same sequence on $I(0^{\times 9})_{1}$.
$$
\begin{array}{c|ccccccccc|c}
t & m_1 & m_2 & m_3 & m_4 & m_5 & m_6 & m_7 & m_8 & m_9 & k \\ \hline
1 & \un{0} & \un{0} & \un{0} & \un{0} & 0 & 0 & 0 & 0 & 0 & +2 \\
3 & \un{2} & 2 & 2 & 2 & \un{0} & \un{0} & \un{0} & 0 & 0 & +4 \\
7 & \un{6} & \un{2} & 2 & 2 & 4 & 4 & 4 & \un{0} & \un{0} & +6 \\
13 & \un{12} & 8 & \un{2} & \un{2} & \un{4} & 4 & 4 & 6 & 6 & +6 \\
19 & \un{18} & 8 & 8 & 8 & 10 & \un{4} & \un{4} & \un{6} & 6 & +6 \\
25 & \un{24} & \un{8} & \un{8} & 8 & 10 & 10 & 10 & 12 & \un{6} & +4 \\
29 & \un{28} & 12 & 12 & \un{8} & \un{10} & \un{10} & 10 & 12 & 10 & +2 \\
31 & \un{30} & 12 & 12 & \un{10} & 10 & 10 & \un{12} & 12 & \un{12} & -2 \\
29 & 28 & 12 & 12 & 8 & 10 & 10 & 10 & 12 & 10
\end{array}
$$
We use Proposition \ref{cre8} (for multiplicities $m_2,\dots,m_9$) to complete the proof.
\item
$n=17$. Consider $I((2m)^{\times 17})_{5m-1}$.
It suffices to show that $I(4m,(2m)^{\times 9})_{5m-1} = 0$.
As in the case $n=14$, only the free part will change during Cremona.
But this free part can achieve arbitrarily low degree according to Proposition \ref{3crseq}.
\item
$n=21$. Consider $I((3m)^{\times 21})_{8m-1}$.
It suffices to show that $I((5m^{\times 3},(3m)^{\times 6})_{8m-1} = 0$.
Observe that Cremona on $I(5^{\times 3},3^{\times 6})_{8}$ gives $I(3^{\times 3},1,3^{\times 5})_{6}$. Further Cremonas
on this ideal does nothing (whenever we ommit point with multiplicity one). Since the free part becomes
$I((-2)^{\times 3},-2,0^{\times 6})_{-3}$ and multiplicity $-2$ will not be used, we are done by Proposition \ref{cre8}.
\item
$n=24$. Consider $I((39m)^{\times 24})_{107m}$.
It suffices to show that $I((65m)^{\times 4},(39m)^{\times 4})_{107m} = 0$.
It is easy to observe that by a sequence of four Cremona we obtain $I(\ldots)_{-5m}$, which completes the proof.
\item
$n=30$. Consider $I(m^{\times 30})_{3m-1}$.
It suffices to show that $I((2m)^{\times 3},m^{\times 6})_{3m-1} = 0$.
After one Cremona we obtain $I((m-2)^{\times 3},m^{\times 5}_{3m-3}$ and argue similarly as in the case $n=8$.
\item
$n=35$. Consider $I((21m)^{\times 35})_{67m-1}$.
It suffices to show that 
$$I((42m)^{\times 3},35m,(21m)^{\times 6})_{67m-1} = 0.$$
Consider the following sequence of Cremona:
$$
\begin{array}{c|cccccccccc|c}
t & m_1 & m_2 & m_3 & m_4 & m_5 & m_6 & m_7 & m_8 & m_9 & m_{10} & k \\ \hline
67 & \un{42} & \un{42} & \un{42} & \un{35} & 21 & 21 & 21 & 21 & 21 & 21 & -27 \\
40 & 15 & 15 & 15 & 8 & \un{21} & \un{21} & \un{21} & \un{21} & 21 & 21 & -4 \\
36 & 15 & 15 & 15 & 8 & \un{17} & \un{17} & 17 & 17 & \un{21} & \un{21} & -4 \\
32 & 15 & 15 & 15 & 8 & 13 & 13 & \un{17} & \un{17} & \un{17} & \un{17} & -4 \\
28 & \un{15} & \un{15} & \un{15} & 8 & \un{13} & 13 & 13 & 13 & 13 & 13 & -2 \\
26 & 13 & 13 & 13 & 8 & 11 & 13 & 13 & 13 & 13 & 13
\end{array}
$$
The same sequence on $I(0^{\times 10})_{1}$ gives
$$
\begin{array}{c|cccccccccc|c}
t & m_1 & m_2 & m_3 & m_4 & m_5 & m_6 & m_7 & m_8 & m_9 & m_{10} & k \\ \hline
1 & \un{0} & \un{0} & \un{0} & \un{0} & 0 & 0 & 0 & 0 & 0 & 0 & +2 \\
3 & 2 & 2 & 2 & 2 & \un{0} & \un{0} & \un{0} & \un{0} & 0 & 0 & +6 \\
9 & 2 & 2 & 2 & 2 & \un{6} & \un{6} & 6 & 6 & \un{0} & \un{0} & +6 \\
15 & 2 & 2 & 2 & 2 & 12 & 12 & \un{6} & \un{6} & \un{6} & \un{6} & +6 \\
21 & \un{2} & \un{2} & \un{2} & 2 & \un{12} & 12 & 12 & 12 & 12 & 12 & +24 \\
45 & 26 & 26 & 26 & 2 & 36 & 12 & 12 & 12 & 12 & 12
\end{array}
$$
Now we proceed as in the case $n=21$.
\end{itemize}
\end{proof}

\begin{proposition}
\label{nicegamma}
We have the following inequalities:
\begin{itemize}
\item
$\gamma(n) \geq \gamma(k)$ for $n \geq k$;
\item
$\gamma(a \cdot 8^k) \geq 2^k \cdot \gamma(a)$, for nonnegative integers $a$ and $k$.
\end{itemize}
\end{proposition}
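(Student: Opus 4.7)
The first inequality is immediate from Proposition \ref{Wc}(2): if $n \geq k$, any $k$-element subset of $n$ generic simple points is itself in general position, so the ideal $I_n$ of all $n$ points is contained in the ideal $I_k$ of that subset, giving $\gamma(n) = \gamma(I_n) \geq \gamma(I_k) = \gamma(k)$.

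For the second inequality I would reduce to the single step $k=1$: the inductive step $\gamma(a \cdot 8^{k+1}) \geq 2\gamma(a\cdot 8^k)$ is just $\gamma(8b) \geq 2\gamma(b)$ applied with $b = a \cdot 8^{k}$. So the whole content of the statement is
$$\gamma(8a) \geq 2\gamma(a) \qquad \text{for every } a \geq 1.$$
To get this, it suffices to prove the degree inequality $\alpha(I(m^{\times 8a})) \geq 2\alpha(I(m^{\times a}))$ for every $m \geq 1$, because dividing by $m$ and letting $m \to \infty$ then yields the Waldschmidt inequality by Proposition \ref{Wc}(1) (and the limit definition of $\gamma$).

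Write $\alpha_a := \alpha(I(m^{\times a}))$, so that by definition $I(m^{\times a})_{\alpha_a-1} = 0$. The goal becomes $I(m^{\times 8a})_{2\alpha_a-1} = 0$. The plan is to apply Proposition \ref{glue} eight times in succession, each time trading a block of $a$ points of multiplicity $m$ for a single ``virtual'' point of multiplicity $\alpha_a$. Concretely, with $k = \alpha_a - 1$ and using the fixed vanishing $I(m^{\times a})_{\alpha_a-1} = 0$ as the left-hand hypothesis, Proposition \ref{glue} reduces the vanishing of
$$I(m^{\times (8-j)a},\alpha_a^{\times j})_{2\alpha_a-1}$$
to the vanishing of $I(m^{\times(7-j)a},\alpha_a^{\times(j+1)})_{2\alpha_a-1}$. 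Iterating from $j=0$ up to $j=8$ leaves the single remaining requirement $I(\alpha_a^{\times 8})_{2\alpha_a-1} = 0$, which is exactly the $n=8$ case of Proposition \ref{vargamma} applied with $\alpha_a$ in place of the multiplicity parameter~$m$.

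There is no real obstacle beyond seeing the trick: once one chooses $I(m^{\times a})_{\alpha_a - 1} = 0$ as the reusable ``building block'' and pairs it with the already-established $n=8$ case of Proposition \ref{vargamma}, everything else is bookkeeping. The only things to verify are that the left-hand hypothesis of glue is independent of the iteration index $j$ (it is, since it only involves the fixed block $(m^{\times a})$) and that the degree $2\alpha_a - 1$ is preserved throughout the eight glue steps (it is, by the statement of Proposition \ref{glue}).
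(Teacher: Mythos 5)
Your proof of the first inequality matches the paper's intent: $I(1^{\times n}) \subseteq I(1^{\times k})$ for a subset of the points gives $\gamma(n) \geq \gamma(k)$ by Proposition \ref{Wc}(2).

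For the second inequality, you use the same two ingredients as the paper --- Proposition \ref{glue} and the fact $\gamma(8) \geq 2$ (equivalently $I(m^{\times 8})_{2m-1}=0$ for all $m$) --- but you glue in the opposite direction. You fix $\alpha_a=\alpha(I(m^{\times a}))$ and apply glue eight times with reusable left hypothesis $I(m^{\times a})_{\alpha_a-1}=0$, collapsing each block of $a$ points to a single virtual point of multiplicity $\alpha_a$ and reducing the claim to $I(\alpha_a^{\times 8})_{2\alpha_a-1}=0$. The paper instead applies glue $b$ times with fixed left hypothesis $I(m^{\times 8})_{2m-1}=0$, expanding each of the $b$ points of multiplicity $2m$ into a block of $8$ points of multiplicity $m$; this yields $\alpha(I(m^{\times 8b}))\geq\alpha(I((2m)^{\times b}))\geq 2m\gamma(b)$ directly. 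Both chains are valid and converge to the same Waldschmidt inequality after dividing by $m$ and letting $m\to\infty$. The paper's decomposition is marginally cleaner because it never introduces the $m$-dependent quantity $\alpha_a$ and uses a single, fixed vanishing statement as the glue pivot; your version is equally correct and perhaps makes it more transparent that the finite inequality $\alpha(I(m^{\times 8a}))\geq 2\alpha(I(m^{\times a}))$ holds for every individual $m$, not just in the limit. One small bookkeeping note: the induction on $k$ and the trivial case $a=0$ (where both sides are zero) should really be stated explicitly, and the iteration consists of eight glue steps indexed $j=0,\dots,7$, reaching the state $j=8$.
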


\begin{proof}
The first inequality is obvious. The second follows from the fact that $\gamma(8b) \geq 2\gamma(b)$ and induction.
To prove the last inequality, observe that $I(m^{\times 8})_{2m-1} = 0$ (since $\gamma(8)\geq 2$). Using Proposition \ref{glue} exactly
$b$ times, we know that if $I((2m)^{\times b})_t=0$ then $I(m^{\times 8b})_t=0$. This, together with Proposition \ref{Wc}, leads to the following
inequalities
$$\alpha(I(m^{\times 8b})) \geq \alpha(I((2m)^{\times b})) \geq 2m\gamma(b).$$
Diving both sides by $m$ and passing to the limit completes the proof. We should also observe that the equality
$\gamma(8^k) = 2^k$ follows directly from \cite{evain}.
\end{proof}

From the above proposition we easily derive the following lower bound for $\gamma(n)$:
$$\gamma(n) \geq \frac{1}{2} \sqrt[3]{n}.$$
Unfortunately, this bound is not sufficient to show containment we need. But we can find better bounds.

\begin{proposition}
\label{gamma512}
Define $$\delta(n) = \frac{3\sqrt[3]{6n}+4.3}{7}.$$
If $n \geq 512$ then 
$$\gamma(n) \geq \delta(n) \geq 0.7787 \sqrt[3]{n} + 0.6142.$$
\end{proposition}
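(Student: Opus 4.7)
The plan is to separate the claim into its two right-hand inequalities. The purely numerical part $\delta(n) \geq 0.7787\sqrt[3]{n}+0.6142$ is immediate: $\delta(n) = \frac{3\sqrt[3]{6}}{7}\sqrt[3]{n} + \frac{4.3}{7}$, with $\frac{3\sqrt[3]{6}}{7} = 0.77877\ldots > 0.7787$ and $\frac{4.3}{7} = 0.61428\ldots > 0.6142$.

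For the substantive bound $\gamma(n) \geq \delta(n)$, I would combine Propositions \ref{vargamma} and \ref{nicegamma}: for every $k$ in the table $T = \{1,\ldots,8,12,14,16,17,21,24,30,35\}$ and every $j \geq 0$ with $k\cdot 8^j \leq n$, monotonicity together with the $8$-fold doubling yields $\gamma(n) \geq \gamma(k\cdot 8^j) \geq 2^j b(k)$. Setting
\[
B(n) := \max\{\,2^j b(k) : k \in T,\ j \geq 0,\ k\cdot 8^j \leq n\,\},
\]
a nondecreasing step function, the task reduces to $B(n) \geq \delta(n)$ for $n \geq 512$. I would establish this by induction on $\lfloor\log_8 n\rfloor$.

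The base range is $n \in [512, 4096)$: the grid points $k\cdot 8^j$ with $j\in\{2,3\}$ partition it into about fifteen subintervals on which $B$ is constant, and on each one I would check $B(N_i) \geq \delta(N_{i+1})$ for consecutive grid points $N_i < N_{i+1}$. The tightest case is the leftmost interval $[512,768)$, where $B=8$ (from $1 \cdot 8^3$) versus $\delta(768) \approx 7.745$; the remaining cases carry comfortable slack. For the induction step $n \geq 4096$, set $m = \lfloor n/8\rfloor \geq 512$: Proposition \ref{nicegamma} gives $\gamma(n) \geq \gamma(8m) \geq 2\gamma(m)$, and the induction hypothesis applied to $m$ gives $2\gamma(m) \geq 2\delta(m)$. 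It remains to verify $2\delta(m) \geq \delta(n)$, which after clearing denominators becomes $3\sqrt[3]{6n}-6\sqrt[3]{6m} \leq 4.3$. Setting $a=\sqrt[3]{6n}$ and $b=\sqrt[3]{48m}$, one has $a^3-b^3 = 6(n-8m) \leq 42$, hence $a-b \leq (a^3-b^3)/(3b^2)$, so
\[
3(a-b) \leq 42/(48m)^{2/3} \leq 42/(48 \cdot 512)^{2/3} < 0.06,
\]
far below $4.3$.

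The only real labor is the finite base-range check; each case is routine but the fifteen or so subintervals must be tabulated carefully. The constant $4.3/7$ in the definition of $\delta$ appears to have been tuned precisely so that both the base-range inequalities (where the $B$-values must dominate $\delta$ at slightly larger arguments) and the induction step (which must absorb the small cube-root discrepancy $\sqrt[3]{6n}-2\sqrt[3]{6\lfloor n/8\rfloor}$) close simultaneously.
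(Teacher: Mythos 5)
Your proposal is correct, and it reorganizes the argument rather than reproducing it. The paper also works with the step function coming from Propositions \ref{vargamma} and \ref{nicegamma}, but it never sets up an induction: it splits each octave $[8^k,8^{k+1})$ into six subintervals (with the six chosen anchors $1,12,16,24,30,5$), proves the required numerical inequality once at $k=3$, and simply observes that the right-hand side $\tfrac{3}{7}\sqrt[3]{6a}+\tfrac{4.3}{7\cdot 2^k}$ decreases in $k$ because the cube-root part is scale-invariant under $(n,\gamma)\mapsto(8n,2\gamma)$ while the additive constant shrinks. Your version makes that observation explicit and self-contained as an induction step $2\delta(m)\geq\delta(n)$ for $m=\lfloor n/8\rfloor$, which is clean and works for arbitrary $n$ (not just $n=a\cdot 8^k$), at the cost of an $a^3-b^3$ estimate the paper avoids by only evaluating $\delta$ at grid points and appealing to its monotonicity. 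On the other side of the ledger, your base range $[512,4096)$ uses the full grid $k\cdot 8^j$ from the table, which gives roughly a dozen subintervals to tabulate, whereas the paper gets by with six because each subinterval is anchored to a single well-chosen $a$ from the table; you could of course adopt those six anchors in your base case. Your identification of $[512,768)$ as the binding constraint matches the paper's tightest case, and your closing remark about why the constant $4.3/7$ is tuned is exactly what the paper's ``decreases when $k$ increases'' observation encodes.
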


\begin{proof}
First observe that $n \in [8^k,8^{k+1})$ for some $k \geq 3$.
We will consider several cases, depending on the position of $n$ in the interval $[8^k,8^{k+1})$.
\begin{itemize}
\item
$n \in [8^k,\frac{3}{2} \cdot 8^k]$. First, show that
\begin{equation}
\label{neq1}
1 \geq \frac{3}{7}\sqrt[3]{9}+\frac{4.3}{7\cdot 2^k}.
\end{equation}
Indeed, this is true for $k=3$ and thus for any $k \geq 3$, since the right hand side of the inequality
decreases when $k$ increases. By Proposition \ref{nicegamma}
$$\gamma(n) \geq \gamma(8^k) \geq 2^k$$
Multiplying both sides of \eqref{neq1} by $2^k$ we get exactly
$$2^k \geq \delta\left(\frac{3}{2}\cdot 8^k\right).$$
Since $\delta(n)$ is increasing with respect to $n$, we complete the proof in this case.
The rest of the proof will go exactly along the same lines.

\item
$n \in [\frac{3}{2} \cdot 8^k, 2 \cdot 8^k]$. Observe that
$$\frac{63}{57} \geq \frac{3}{7}\sqrt[3]{12}+\frac{4.3}{7\cdot 2^k}.$$
Hence
$$\gamma(n) \geq \gamma\left(\frac{3}{2} \cdot 8^k\right) = \gamma(12 \cdot 8^{k-1}) \geq 2^{k-1}\gamma(12).$$
Now we use Proposition \ref{vargamma} to bound $\gamma(12)$:
$$2^{k-1} \gamma(12) \geq 2^k \frac{63}{57} \geq \delta(2 \cdot 8^k) \geq \delta(n).$$
We will use Proposition \ref{vargamma} in all further cases.
\item
$n \in [2 \cdot 8^k,3 \cdot 8^k]$. Observe that
$$\frac{11}{9} \geq \frac{3}{7}\sqrt[3]{18}+\frac{4.3}{7\cdot 2^k}.$$
Hence
$$\gamma(n) \geq \gamma(2 \cdot 8^k) = \gamma(16 \cdot 8^{k-1}) \geq 2^{k-1}\gamma(16) \geq 2^k \frac{11}{9} \geq \delta(3 \cdot 8^k) \geq \delta(n).$$
\item
$n \in [3 \cdot 8^k, 4 \cdot 8^k]$. Observe that
$$\frac{4}{3} \geq \frac{3}{7}\sqrt[3]{24}+\frac{4.3}{7\cdot 2^k}.$$
Hence
$$\gamma(n) \geq \gamma(3 \cdot 8^k) = \gamma(24 \cdot 8^{k-1}) \geq 2^{k-1}\gamma(24) \geq 2^k \frac{4}{3} \geq \delta(4 \cdot 8^k) \geq \delta(n).$$
\item
$n \in [4 \cdot 8^k, 6 \cdot 8^k]$. Observe that
$$\frac{3}{2} \geq \frac{3}{7}\sqrt[3]{36}+\frac{4.3}{7\cdot 2^k}.$$
Hence
$$\gamma(n) \geq \gamma(4 \cdot 8^k) \geq \gamma(30 \cdot 8^{k-1}) \geq 2^{k-1}\gamma(30) \geq 2^k \frac{3}{2} \geq \delta(6 \cdot 8^k) \geq \delta(n).$$
\item
$n \in [6 \cdot 8^k, 8 \cdot 8^k]$. Observe that
$$\frac{5}{3} \geq \frac{3}{7}\sqrt[3]{48}+\frac{4.3}{7\cdot 2^k}.$$
Hence
$$\gamma(n) \geq \gamma(5 \cdot 8^k) \geq 2^k\gamma(5) \geq 2^k \frac{5}{3} \geq \delta(8 \cdot 8^k) \geq \delta(n).$$
\end{itemize}
\end{proof}

\section{Containment results}

\begin{proposition}
\label{gamma}
Let $n \geq 1$, let $I$ be an ideal of $n$ simple points in general position, let
$$\binom{s}{3} < n \leq \binom{s+1}{3},$$
let $r \geq 1$.
If either
\begin{equation}
\label{ass1}
\gamma(n) \geq \frac{(s+1)r-2}{3r-2}
\end{equation}
or
\begin{equation}
\label{ass2}
\alpha(I^{(3r-2)}) \geq (s+1)r-2
\end{equation}
then
$I^{(3r-2)} \subset M^{2r-2}I^{r}$ and $I^{(3r-1)} \subset M^{2r-1}I^{r}$ for all $r \geq 1$.
\end{proposition}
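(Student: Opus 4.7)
The plan is to reduce the proposition to a single key containment. First, the implication (1)\,$\Rightarrow$\,(2) is immediate from Proposition~\ref{Wc}(1):
\[
\alpha(I^{(3r-2)}) \geq (3r-2)\gamma(n) \geq (3r-2)\cdot\frac{(s+1)r-2}{3r-2} = (s+1)r-2.
\]
Next, I would observe that the second containment $I^{(3r-1)} \subset M^{2r-1}I^r$ follows from the first via the Euler identity. For any $f \in I^{(3r-1)}$ of positive degree $d$ one has $d f = \sum_{i=0}^{3} x_i\,\partial f/\partial x_i$; each partial derivative has degree $d-1$ and multiplicity at least $3r-2$ at every $p_j$, so $\partial f/\partial x_i \in I^{(3r-2)}$. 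Since $\field$ has characteristic zero, this yields $f \in M \cdot I^{(3r-2)}$, and assuming the first containment, $f \in M \cdot M^{2r-2}I^r = M^{2r-1}I^r$.

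The whole statement therefore reduces to the assertion $I^{(3r-2)} \subset M^{2r-2}I^r$ under assumption~(2), which I would attack by a degree-by-degree analysis preceded by a numerical-compatibility check. In the range $\binom{s}{3}<n\leq\binom{s+1}{3}$, the standard expected-dimension count for $n$ generic points in $\PP^3$ gives $\alpha(I)\leq s-1$, and since $I$ is generated in its initial degree one has $\alpha(I^r)=r\alpha(I)$ and
\[
\alpha(M^{2r-2}I^r) = r\alpha(I)+(2r-2) \leq (s+1)r-2,
\]
so (2) is exactly the inequality $\alpha(I^{(3r-2)}) \geq \alpha(M^{2r-2}I^r)$ that rules out the naive initial-degree obstruction. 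In degrees below $(s+1)r-2$ the containment is vacuous by~(2). For $d\geq(s+1)r-2$, a form $f \in I^{(3r-2)}_d$ has multiplicity at least $3r-2\geq r$ at each $p_i$ and, since $I$ is locally the maximal ideal of a regular local ring of dimension $3$ at $p_i$, lies in $I^r$ away from the irrelevant ideal; the global inclusion into $M^{2r-2}I^r$ would then be forced by a Castelnuovo--Mumford regularity estimate for $M^{2r-2}I^r$ applicable throughout the range of $n$.

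The hard part is precisely this last regularity estimate: one must bound the regularity of $M^{2r-2}I^r$ (equivalently, control the Hilbert function of $R/M^{2r-2}I^r$) by $(s+1)r-2$ uniformly in $n$. The threshold $(s+1)r-2$ in the hypothesis is calibrated so that it matches this regularity, and verifying the match, by a direct postulation analysis for $n$ generic simple points in $\PP^3$, is the substantive step I expect to dominate the proof.
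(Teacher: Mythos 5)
Your treatment of the implication $(1)\Rightarrow(2)$ via Proposition~\ref{Wc} matches the paper, and your Euler-identity reduction $I^{(3r-1)}\subset M\cdot I^{(3r-2)}$ is a clean alternative to the paper's route (which instead re-runs the same degree-count with $2r-1$ in place of $2r-2$, justifying the extra degree by ``taking a derivative''). Both get the second containment from essentially the same observation, so there is no issue there.

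The genuine gap is the first containment. You reduce to ``a regularity estimate for $M^{2r-2}I^r$'' and explicitly leave it open, but this is not the path the paper takes, and bounding $\reg(M^{2r-2}I^r)$ directly would be substantially harder than what is actually needed. The paper never touches the regularity of $M^{2r-2}I^r$. It uses two cited facts: (i) $\reg(I)\leq s-1$ and $I$ is generated in degrees $\leq s-1$ for $n$ generic points with $\binom{s}{3}<n\leq\binom{s+1}{3}$ (from \cite{Dum}); and (ii) the Bocci--Harbourne criterion \cite[Lemma 2.3.4]{BoHa1}, namely $r\reg(I)\leq\alpha(I^{(m)})$ implies $I^{(m)}\subset I^r$. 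Assumption~\eqref{ass2} gives $\alpha(I^{(3r-2)})\geq(s+1)r-2\geq(s-1)r\geq r\reg(I)$, so $I^{(3r-2)}\subset I^r$. The upgrade to $M^{2r-2}I^r$ is then an elementary degree count in the spirit of \cite[Lemma 4.6]{HaHu}: since $I$ is generated in degrees $\leq s-1$, the ideal $I^r$ is generated in degrees $\leq r(s-1)$; any $f\in I^{(3r-2)}$ has $\deg f\geq(s+1)r-2=r(s-1)+(2r-2)$, so when $f$ is written in terms of the generators of $I^r$ the cofactors lie in $M^{2r-2}$. Thus the whole weight rests on $\reg(I)\leq s-1$, not on controlling the Hilbert function of $R/M^{2r-2}I^r$ as your sketch suggests. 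Without citing or reproving something like the Bocci--Harbourne criterion plus the generation-degree bookkeeping, your proposal does not close the argument.
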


\begin{proof}
From \cite[proof of Proposition 12]{Dum} it follows that 
the Castelnuovo-Mumford regularity $\reg(I)$ of $I$ is at most $s-1$ and $I$ is generated in degrees at most $s-1$.

By \cite[Lemma 2.3.4]{BoHa1} if $r \reg(I) \leq \alpha(I^{(m)})$ then $I^{(m)} \subset I^r$. By \eqref{ass2}
$$\alpha(I^{(3r-1)}) \geq \alpha(I^{(3r-2)}) \geq (s+1)r-2 \geq (s-1)r \geq \reg(I) r$$
gives $I^{(3r-2)} \subset I^r$ and $I^{(3r-1)} \subset I^r$.

By the argument similar to used in the \cite[Lemma 4.6]{HaHu} if, for an ideal $I$ of points in $\PP^N$, $I^{(m)} \subset I^{r}$ and
$$\alpha(I^{(m)}) \geq rt+k$$
for some $t$ such that $I$ is generated in degrees $t$ and less, then
$$I^{(m)} \subset M^{k}I^r.$$

Taking $t=s-1$, using \eqref{ass2} we have
$$\alpha(I^{(3r-2)}) \geq (s+1)r-2 = (s-1)r + 2(r-1) \geq rt+2r-2,$$
which proves $I^{(3r-2)} \subset M^{2r-2}I^{r}$. Now observe that (by taking derivative)
$$\alpha(I^{(3r-1)}) > \alpha(I^{(3r-2)}),$$
and hence
$$\alpha(I^{(3r-1)}) \geq \alpha(I^{(3r-2)})+1 \geq rt+2r-1,$$
which gives $I^{(3r-1)} \subset M^{2r-1}I^{r}$.

Observe also that \eqref{ass2} follows from \eqref{ass1} by Proposition \ref{Wc}.
\end{proof}

\begin{proposition}
\label{case1}
Let $I$ be an ideal of $n \geq 512$ simple points in general position. 
Then $I^{(3r-2)} \subset M^{2r-2}I^r$ for $r \geq 3$.
\end{proposition}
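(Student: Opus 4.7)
The plan is to apply Proposition \ref{gamma} by verifying its first hypothesis \eqref{ass1}, namely $\gamma(n) \geq \frac{(s+1)r-2}{3r-2}$, where $s$ is the unique integer with $\binom{s}{3} < n \leq \binom{s+1}{3}$. Since $n \geq 512 > \binom{15}{3} = 455$, we have $s \geq 15$ throughout, so in particular $s \geq 3$ will always hold in the estimates below.

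For the left-hand side I would invoke Proposition \ref{gamma512}, which in our range gives $\gamma(n) \geq \delta(n) = \frac{3\sqrt[3]{6n}+4.3}{7}$. For the right-hand side, a short computation shows
\[
\frac{d}{dr}\left(\frac{(s+1)r-2}{3r-2}\right) = \frac{4-2s}{(3r-2)^{2}},
\]
which is negative for $s \geq 3$. Hence the function is decreasing in $r$, so on the range $r \geq 3$ its maximum is attained at $r=3$ with value $\frac{3s+1}{7}$. In other words, it is enough to prove the single inequality $\delta(n) \geq \frac{3s+1}{7}$, uniformly in $r \geq 3$.

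Rearranging, this inequality is equivalent to $\sqrt[3]{6n} \geq s - 1.1$, i.e.\ $6n \geq (s-1.1)^{3}$. From the definition of $s$ we have $6n > s(s-1)(s-2)$, so it suffices to check the purely polynomial inequality $s(s-1)(s-2) \geq (s-1.1)^{3}$; expanding both sides this reduces to $0.3\, s^{2} - 1.63\, s + 1.331 \geq 0$, whose larger root is easily seen to lie below $5$. Since $s \geq 15$ in our range, the inequality holds with considerable room, and Proposition \ref{gamma} then yields $I^{(3r-2)} \subset M^{2r-2}I^{r}$.

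I do not expect any genuine obstacle here: the substantive work has already been packaged into Propositions \ref{gamma} and \ref{gamma512}, and all that remains is an elementary comparison between the cubic lower bound $\delta(n)$ and the rational function $\frac{(s+1)r-2}{3r-2}$, which is carried out above. The only mildly delicate point is the monotonicity-in-$r$ reduction, which is why the statement restricts to $r \geq 3$; the cases $r = 1, 2$ are precisely what the other propositions in the proof of Theorem \ref{thm1} will cover.
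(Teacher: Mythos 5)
Your proof is correct and follows essentially the same route as the paper's: both reduce the claim to hypothesis \eqref{ass1} of Proposition \ref{gamma}, both invoke Proposition \ref{gamma512} to lower-bound $\gamma(n)$ by $\delta(n)$, and both rely on the cubic estimate $s(s-1)(s-2)\geq(s-1.1)^3$; the paper obtains the needed uniformity in $r$ by directly verifying $\delta(n)\geq\frac{(\sqrt[3]{6n}+2.1)r-2}{3r-2}$ for $r\geq3$, which is the same monotonicity-in-$r$ observation you make explicit by differentiating.
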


\begin{proof}
Take $s$ satisfying
\begin{equation}
\label{nbetc1}
\binom{s}{3} < n \leq \binom{s+1}{3}.
\end{equation}
Define, as before,  $$\delta(n) = \frac{3\sqrt[3]{6n}+4.3}{7}.$$
By Proposition \ref{gamma512} we know that
$$\gamma(I) = \gamma(n) \geq \delta(n).$$
By a straightforward computation we can show that for $r\geq 3$
$$\delta(n) \geq \frac{(\sqrt[3]{6n}+2.1)r-2}{3r-2}.$$
From the inequality
$$s(s-1)(s-2) \geq (s-1.1)^3,$$
which holds for $s \geq 5$,
we obtain (since $6n > s(s-1)(s-2)$ by \eqref{nbetc1})
$$\sqrt[3]{6n}+1.1 \geq s$$
and hence
$$\frac{(\sqrt[3]{6n}+2.1)r-2}{3r-2} \geq \frac{(s+1)r-2}{3r-2}.$$
We conclude by Proposition \ref{gamma}.
\end{proof}

\begin{proposition}
\label{case2}
Let $I$ be an ideal of $n$ simple points in general position.
If $5 \leq n \leq 511$ then
$I^{(3r-2)} \subset M^{2r-2}I^r$ for $r \geq 3$.
\end{proposition}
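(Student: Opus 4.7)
The plan is to apply Proposition \ref{gamma} to each $n$ in $[5,511]$. The right-hand side $\frac{(s+1)r-2}{3r-2}$ of \eqref{ass1} is non-increasing in $r$ for $s \geq 2$, since its $r$-derivative equals $(4-2s)/(3r-2)^2 \leq 0$. Hence it suffices to verify \eqref{ass1} at the most restrictive value $r=3$, where the inequality reads
$$\gamma(n) \geq \frac{3s+1}{7}, \qquad \binom{s}{3} < n \leq \binom{s+1}{3}.$$
The interval $5 \leq n \leq 511$ then decomposes into twelve strata corresponding to $s = 4, 5, \ldots, 15$ (since $\binom{16}{3} = 560 > 511$).

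Within each stratum, the monotonicity $\gamma(n) \geq \gamma(k)$ for $n \geq k$ from Proposition \ref{nicegamma} means it is enough to exhibit one $n_0 \leq \binom{s}{3}+1$ at which the bound $\gamma(n_0) \geq (3s+1)/7$ already holds, and the inequality then propagates throughout the stratum. For this I would combine the explicit table of Proposition \ref{vargamma} with the scaling $\gamma(a\cdot 8^k) \geq 2^k\gamma(a)$ of Proposition \ref{nicegamma}: for instance $\gamma(56) \geq 2\gamma(7) \geq 56/15$ dispatches the stratum $s = 8$, $\gamma(240) \geq 2\gamma(30) \geq 6$ handles $s = 12$, and $\gamma(448) \geq 4\gamma(7) \geq 112/15$ takes care of $s = 15$. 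The high strata $s \geq 7$ are all covered cleanly this way.

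A small number of residual pairs survive at the bottom of the low strata, where the required bound narrowly exceeds what Proposition \ref{vargamma} gives directly: typically $n \in \{5,6\}$ in stratum $s = 4$ (since $b(5) = 5/3 < 13/7$), $n \in \{11,12,13\}$ in stratum $s = 5$ (since $b(12) = 126/57 < 16/7$), and $n \in \{21,22,23\}$ in stratum $s = 6$ (since $b(21) = 8/3 < 19/7$). For each such $(n,r)$ pair I would fall back to the weaker condition \eqref{ass2}, namely $\alpha(I^{(3r-2)}) \geq (s+1)r-2$; this amounts to proving $I(m^{\times n})_{t} = 0$ for $m = 3r-2$ and $t = (s+1)r-3$. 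Such vanishings are verified by explicit Cremona sequences in the style of the tables in Proposition \ref{vargamma}, possibly preceded by the gluing trick of Proposition \ref{glue} to split off an already-vanishing subsystem of multiplicities, or invoking Propositions \ref{cre8} or \ref{3crseq} to drive the free degree arbitrarily negative.

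The main obstacle is exactly this last step: each residual pair requires a tailored Cremona chain that drives the formal degree negative while keeping the remaining multiplicities in a configuration to which Proposition \ref{cre8} or \ref{3crseq} applies. Everything else is bookkeeping --- checking that, stratum by stratum, an anchor value from Proposition \ref{vargamma} (possibly scaled by a power of two via Proposition \ref{nicegamma}) is large enough to clear $(3s+1)/7$.
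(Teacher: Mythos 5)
Your top-level plan is exactly the paper's: reduce to \eqref{ass1} at $r=3$ via monotonicity, stratify $5\le n\le 511$ by $s\in\{4,\dots,15\}$, and clear each stratum by exhibiting an anchor $n_0\le\binom{s}{3}+1$ for which Propositions \ref{vargamma} and \ref{nicegamma} force $\gamma(n_0)\ge(3s+1)/7$. One of your sample anchors is misplaced: $240>\binom{12}{3}+1=221$, so $\gamma(240)\ge 2\gamma(30)\ge 6$ serves $s=13$, not $s=12$; for $s=12$ the paper uses $\gamma(168)\ge 2\gamma(21)\ge 16/3\ge 37/7$.

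The genuine gap is in the residual cases, which are substantially harder than your description suggests, and which your toolkit of Cremona chains together with Propositions \ref{glue}, \ref{cre8}, \ref{3crseq} does not cover. For $n=5$ one has $\gamma(5)=5/3$ exactly, and $5/3 < (5r-2)/(3r-2)$ for \emph{every} $r$, so \eqref{ass1} never applies: you must verify \eqref{ass2} for the full infinite family $r\ge 3$, and likewise for $n=6$ (where the $\gamma$-bound only kicks in for $r\ge 10$) and $n=11$ (where $\gamma(11)\ge\gamma(8)=2$ is the limiting value of $(6r-2)/(3r-2)$). The paper closes these with two tools you do not list: for $n=5,6$ a single Cremona reduces $I((3r-2)^{\times 5})_{5r-3}$ to $I(r^{\times 4},3r-2)_{3r-1}$ and then the De Volder--Laface degree criterion \cite[Theorem 5.3]{Laf} for $\le 8$ points in $\PP^3$ gives emptiness for all $r$; for $n=11$ a parametrized Cremona table handles $r\ge 5$, but $r=3,4$ (the bounds $\alpha(I(7^{\times 11}))\ge 16$ and $\alpha(I(10^{\times 11}))\ge 22$) are resolved in Lemma \ref{singular} by a Gr\"obner-basis computation in Singular. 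Without these two ingredients the residual cases remain open, so the proof as proposed is incomplete.
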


\begin{proof}
Take $s$ satisfying
$$\binom{s}{3} < n \leq \binom{s+1}{3}$$
(observe that $s \leq 15$),
we want to show that
$$\gamma(n) \geq \frac{(s+1)r-2}{3r-2}$$ for
all $r \geq 3$. Since $\gamma(n)$ is increasing (with respect to $n$) and the right hand side
is decreasing (with respect to $r$) it is enough to show that
$$\gamma\left(\binom{s}{3}+1\right) \geq \frac{3s+1}{7}.$$
We will consider the following cases:
\begin{itemize}
\item
$s=14$ and $s=15$. Since $\binom{14}{3}+1 = 365 > 320$, it suffices to show that $\gamma(320) \geq \frac{3s+1}{7}$.
But 
$$\gamma(320) = \gamma(5 \cdot 8^2) \geq 4\gamma(5)$$
by Proposition \ref{nicegamma}. Now we use Proposition \ref{vargamma} to obtain
$$4\gamma(5) \geq \frac{20}{3} \geq \frac{46}{7} \geq \frac{3s+1}{7}.$$
All other cases will be treated similarly, with some changes for $s \leq 6$.
\item
$s=13$.
$$\gamma(287) \geq \gamma(240) \geq 2\gamma(30) \geq 6 \geq \frac{40}{7} = \frac{3s+1}{7}.$$
\item
$s=12$.
$$\gamma(221) \geq \gamma(168) \geq 2\gamma(21) \geq \frac{16}{3} \geq \frac{37}{7} = \frac{3s+1}{7}.$$
\item
$s=11$.
$$\gamma(166) \geq \gamma(136) \geq 2\gamma(17) \geq \frac{10}{2} \geq \frac{34}{7} = \frac{3s+1}{7}.$$
\item
$s=10$.
$$\gamma(121) \geq \gamma(112) \geq 2\gamma(14) \geq \frac{14}{3} \geq \frac{31}{7} = \frac{3s+1}{7}.$$
\item
$s=9$.
$$\gamma(84) \geq \gamma(64) \geq 4\gamma(1) \geq 4 \geq \frac{28}{7} = \frac{3s+1}{7}.$$
\item
$s=8$.
$$\gamma(57) \geq \gamma(56) \geq 2\gamma(7) \geq \frac{56}{15} \geq \frac{25}{7} = \frac{3s+1}{7}.$$
\item
$s=7$.
$$\gamma(36) \geq \gamma(35) \geq \frac{67}{21} \geq \frac{22}{7} = \frac{3s+1}{7}.$$
\item
$s=6$. Assume $n \geq 24$, then
$$\gamma(n) \geq \gamma(24) \geq \frac{107}{39} \geq \frac{19}{7} = \frac{3s+1}{7}.$$
For $n \geq 21$ we have
$$\gamma(n) \geq \gamma(21) \geq \frac{8}{3} \geq \frac{26}{10} \geq \frac{7r-2}{3r-2}$$
for $r \geq 4$. For $r=3$, by Proposition \ref{gamma} it is enough to show that \eqref{ass2} is satisfied, ie.
$$I(7^{\times 21})_{18} = 0.$$
By Proposition \ref{glue} it is enough to show that $I(14,14,7^{\times 5})_{18} = 0$.
This can be done by Cremona operations.
\item
$s=5$. Assume $n \geq 14$, then
$$\gamma(n) \geq \gamma(14) \geq \frac{7}{3} \geq \frac{16}{7} = \frac{3s+1}{7}.$$
For $n=12$ and $n=13$ we have
$$\gamma(n) \geq \gamma(12) \geq \frac{107}{39} \geq \frac{22}{10} \geq \frac{6r-2}{3r-2}$$
for $r \geq 4$. It remains to show the case $r=3$, ie. that $\alpha(I(7^{\times 12})) \geq 16$, but we will complete this together with the case $n=11$.
For $n=11$ we want to show that
$$\alpha(I(3r-2)^{\times 11})) \geq 6r-2.$$
By Proposition \ref{glue} it is enough to show that $I(4r-2,(3r-2)^{\times 7})_{6r-3} = 0$.
Consider the following Cremona operations:
$$
\begin{array}{c|ccc|c}
t & m_1 & (m_2)^{\times 3} & (m_3)^{\times 4} & k \\ \hline
6r-3 & \un{4r-2} & \un{3r-2} & 3r-2 & -r+2 \\
5r-1 & 3r & 2r & \un{3r-2} & -2r+6 \\
3r+5 & \un{3r} & \un{2r} & r+4 & -3r+10 \\
15 & 10 & 10-r & \un{r+4} & -4r+14 \\
29-4r & 10 & 10-r & 18-3r
\end{array}
$$
For $r \geq 5$ we have $29-4r < 10$ and the we are done. We are left with two cases to show:
\begin{align*}
\alpha(I(7^{\times 11})) & \geq 16, \\
\alpha(I(10^{\times 11})) & \geq 22.
\end{align*}
The above cases will be completed in Lemma \ref{singular}.
\item
$s=4$.
For $n \geq 7$ we have
$$\gamma(n) \geq \gamma(7) \geq \frac{28}{15} \geq \frac{18}{10} = \frac{(s+1)4-2}{3 \cdot 4-2}.$$
Hence for $r \geq 4$ we are done, for $r=3$ we must show that
$$\alpha(I(7^{\times n})) \geq 13.$$
This is easy with Cremona operations.
For $n=5$ and $n=6$ we will use \cite{Laf}. Consider $I((3r-2)^{\times 5})_{5r-3}$. By Cremona operation
we get
$$I(r^{\times 4},3r-2)_{3r-1}.$$
Observe that there is no Cremona operation on multiplicities $r^{\times 4}$, $3r-2$ and degree $3r-1$ which gives
lower degree.
Thus by \cite[Theorem 5.3]{Laf} there is no element in $I(r^{\times 4},3r-2)_{3r-1}$ if and only if
$$\binom{3r+2}{3} - 4\binom{r+2}{3} - \binom{3r}{3} - 1 < 0,$$
which is easy to compute.
Hence $\alpha(I((3r-2)^{\times n})) \geq 5r-2$ which completes the proof.
\end{itemize}
\end{proof}

\begin{proposition}
\label{case3}
Let $I$ be the ideal of $n \geq 65$ simple points in general position. 
Then $I^{(3r-2)} \subset M^{2r-2}I^r$ for $r = 2$.
\end{proposition}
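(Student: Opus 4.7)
The plan is to invoke Proposition \ref{gamma} with $r = 2$. Taking $s$ with $\binom{s}{3} < n \leq \binom{s+1}{3}$, the containment $I^{(4)} \subset M^{2} I^{2}$ follows once either
$$\gamma(n) \geq \frac{s}{2} \qquad \text{or} \qquad \alpha(I^{(4)}) \geq 2s$$
is established. The hypothesis $n \geq 65 > \binom{8}{3} = 56$ forces $s \geq 8$, and I would split on the value of $s$.

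For $s = 8$ (that is, $65 \leq n \leq 84$) the Waldschmidt branch alone suffices: Proposition \ref{nicegamma} yields $\gamma(n) \geq \gamma(64) \geq 2^{2} \gamma(1) = 4 = s/2$. For every $s \geq 9$, on the other hand, the Waldschmidt inequality $\gamma(n) \geq s/2$ is already ruled out near the lower end of each $s$-interval --- indeed for $s \in \{9, 10, 11\}$ the universal upper bound $\gamma(n) \leq \sqrt[3]{n}$ applied to $n_{0} = \binom{s}{3} + 1$ gives $\sqrt[3]{n_{0}} < s/2$ --- so I would instead establish the alternative hypothesis \eqref{ass2}, namely
$$I(4^{\times n_{0}})_{2s - 1} = 0 \qquad \text{with } n_{0} = \binom{s}{3} + 1.$$
Since $\alpha(I(4^{\times n}))$ is non-decreasing in $n$, this vanishing propagates to every $n$ in the interval.

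The verification of the $n_{0}$-vanishing should combine Proposition \ref{glue} with Cremona reductions. Because $\gamma(8) \geq 2$ yields $I(4^{\times 8})_{7} = 0$, Proposition \ref{glue} makes it legal to replace eight multiplicity-$4$ points by a single multiplicity-$8$ point without altering the triviality of the ambient ideal. Writing $n_{0} = 8 q + \rho$ with $0 \leq \rho < 8$, the problem thus reduces to
$$I(8^{\times q}, 4^{\times \rho})_{2 s - 1} = 0.$$
From here I would perform a sequence of Cremona operations on four multiplicities at a time, in the style of the tables in Propositions \ref{vargamma} and \ref{case2}, to force the degree negative; for $s$ sufficiently large one may finish off with Proposition \ref{cre8} once the first eight multiplicities are large enough to satisfy $4 t < m_{1} + \dots + m_{8}$.

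The main obstacle is that the clean Waldschmidt route used in Proposition \ref{case1} is genuinely unavailable for $r = 2$: the required threshold $\gamma(n) \geq s/2$ is in some cases flatly forbidden by the upper bound $\gamma(n) \leq \sqrt[3]{n}$. The proof is therefore forced to handle an infinite family of $(s, n_{0})$ pairs by explicit Cremona descent, and the real difficulty is arranging a single reduction scheme --- rather than an ad hoc table for every $s$ --- that terminates for every $s \geq 9$. The key leverage is the $8$-to-$1$ trade furnished by $\gamma(8) \geq 2$, which concentrates enough mass into a handful of multiplicities to allow the final Cremona moves to push the degree below zero.
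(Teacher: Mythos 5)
Your framework is the right one: you invoke Proposition~\ref{gamma} with $r=2$, reduce to $\gamma(n)\geq s/2$ or $\alpha(I^{(4)})\geq 2s$, dispatch $s=8$ via $\gamma(n)\geq\gamma(64)\geq 4$, and correctly observe that for $s\geq 9$ the Waldschmidt branch is blocked by the upper bound $\gamma(n)\leq\sqrt[3]{n}$ near $n_0=\binom{s}{3}+1$. But from that point your argument is a sketch, not a proof. The $8$-to-$1$ trade via Proposition~\ref{glue} is a legitimate move, yet you never produce the Cremona descent that would drive $I\bigl(8^{\times q},4^{\times\rho}\bigr)_{2s-1}$ to negative degree, and you explicitly concede that you do not have a single reduction scheme that works for every $s\geq 9$. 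That unresolved uniformity is a genuine gap, not a detail: every other Cremona argument in this paper is an explicit finite table, and an infinite family of $s$-values would require a recursive scheme with a termination proof that you have not supplied.

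The paper avoids this entirely by invoking a result you do not use: the postulation theorem for generic quartuple fat points in $\PP^3$ (Ballico--Brambilla~\cite{Bram}, supplemented by~\cite{uzDum} for small degrees). It states that $I(4^{\times n})_{t}=0$ precisely when the virtual dimension $\binom{t+3}{3}-20n-1$ is negative, so no Cremona or gluing is needed at all. Taking $t$ least with $\binom{t+3}{3}\geq 20n$ gives $I(4^{\times n})_{t-1}=0$, hence $\alpha(I^{(4)})\geq t$; and from $(t+1)(t+2)(t+3)\geq 120n$ one gets $t\geq\sqrt[3]{120n}-3\geq 2\sqrt[3]{6n}+2.2\geq 2s$, the last step using $s\leq\sqrt[3]{6n}+1.1$ and the numerical inequality valid for all $n\geq 65$. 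This handles every $s$ at once with no case split. Your plan might eventually succeed, but to close it you would have to reprove, by hand, exactly what the quartuple-point postulation theorem hands you for free; as written, the proposal stops at the point where the real work begins.
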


\begin{proof}
Take $s$ satisfying
$$\binom{s}{3} < n \leq \binom{s+1}{3},$$
observe that $s \geq 5$ hence
$$\sqrt[3]{6n} + 1.1 \geq s.$$
By this inequality and Proposition \ref{gamma} it is enough to show that
$$\alpha(I(4^{\times n})) \geq 2\sqrt[3]{6n}+2.2.$$
Take the least $t$ satisfying
$$\binom{t+3}{3} \geq 20n.$$
By \cite[Theorem 1]{Bram} and \cite[Theorem 3]{uzDum} there is no element in $I(4^{\times n})_{t-1}$ if and only if
$$\binom{(t-1)+3}{3} - n\binom{6}{3} - 1 < 0.$$
Since
$$(t+3)^3 \geq (t+1)(t+2)(t+3) \geq 120n$$
we have that
$$t \geq \sqrt[3]{120n}-3 \geq 2\sqrt[3]{6n}+2.2$$
for $n \geq 65$, which completes the proof.
\end{proof}

\begin{proposition}
\label{case4}
Let $I$ be an ideal of $n$ simple points in general position.
If $7 \leq n \leq 64$ then 
$I^{(3r-2)} \subset M^{2r-2}I^r$ for $r = 2$.
\end{proposition}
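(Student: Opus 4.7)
By Proposition~\ref{gamma} with $r=2$, it suffices to establish $\alpha(I(4^{\times n})) \geq 2s$ for each $n \in [7,64]$, where $s$ is determined by $\binom{s}{3} < n \leq \binom{s+1}{3}$. Because $\alpha(I(4^{\times n}))$ is non-decreasing in $n$ (adjoining a further general point cannot lower the minimum degree of forms vanishing to order $4$ at all points), it is enough to verify the inequality at the smallest $n$ in each $s$-range, namely $n \in \{7, 11, 21, 36, 57\}$ corresponding to $s \in \{4, 5, 6, 7, 8\}$.

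The case $n = 7$ is immediate: by Proposition~\ref{vargamma} we have $\gamma(7) \geq 28/15$, so Proposition~\ref{Wc} yields $\alpha(I(4^{\times 7})) \geq 4 \cdot 28/15 > 7$ and thus $\alpha \geq 8 = 2s$.

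For the remaining four values of $n$ the plan is to show $I(4^{\times n})_{2s-1} = 0$ by the two-phase glue-plus-Cremona technique used systematically in Propositions~\ref{vargamma} and~\ref{case2}. In the first phase, Proposition~\ref{glue} is applied to absorb a subconfiguration of $n_0 < n$ quadruple points into a single point of inflated multiplicity $k+1$, where $k+1 = \lceil 4 \gamma(n_0) \rceil$ is dictated by the strongest Waldschmidt lower bound for that $n_0$ recorded in Proposition~\ref{vargamma} (the relevant values being $\gamma(8) \geq 2$, $\gamma(12) \geq 126/57$, $\gamma(16) \geq 22/9$, $\gamma(24) \geq 107/39$). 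In the second phase, the resulting reduced ideal $I(4^{\times(n-n_0)}, k+1)_{2s-1}$ is handled by an explicit sequence of Cremona transformations, recorded in a table analogous to those of Proposition~\ref{vargamma}, which either drives the degree below zero or sets up a configuration to which Proposition~\ref{cre8} can be applied on eight of the multiplicities.

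The delicate case will be $n = 11$, $s = 5$. Here $I(4^{\times 11})_9$ has virtual $\field$-dimension exactly $\binom{12}{3} - 11\binom{6}{3} = 0$, and since Cremona operations preserve the actual $\field$-dimension, no direct Cremona chain applied to the unmodified system can force its vanishing. The argument must therefore use the glue step to replace enough quadruple points by a sufficiently high inflated multiplicity that the auxiliary system's virtual dimension becomes strictly negative before any Cremona is performed; this will require a careful choice of $n_0$, possibly combined with an intermediate Cremona iteration or a second application of Proposition~\ref{glue}. By contrast, for $n = 21, 36, 57$ the virtual dimension of $I(4^{\times n})_{2s-1}$ is already strictly negative (respectively $-56$, $-160$, $-324$), so I expect standard Cremona reductions, preceded by a single glue step when convenient, to close these cases routinely.
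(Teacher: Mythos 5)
Your overall reduction is correct: by Proposition~\ref{gamma} it suffices to show $\alpha(I(4^{\times n})) \geq 2s$, and by monotonicity it is enough to check the smallest $n$ in each $s$-range, namely $n = 7, 11, 21, 36, 57$ for $s = 4, \dots, 8$. Your treatment of $n=7$ via $\gamma(7) \geq 28/15$ is a clean alternative to the paper's short Cremona table for $s=4$; both are fine.

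However, for $s = 5, \dots, 8$ there is a genuine gap. The paper does not use a glue-plus-Cremona argument here at all. It invokes the postulation result of Ballico--Brambilla \cite{Bram} together with \cite{uzDum}: for $t \geq 9$, the system $I(4^{\times n})_t$ has exactly the expected dimension, so it is empty precisely when $\binom{t+3}{3} - 20n - 1 < 0$. This immediately dispatches all of $n = 11, 21, 36, 57$ in one stroke, including the case you flag as delicate --- indeed $n=11$, $t=9$ gives $\binom{12}{3} - 20\cdot 11 - 1 = -1 < 0$, so the system is empty, with no Cremona or gluing needed. Your proposal instead gestures at ``standard Cremona reductions, preceded by a single glue step when convenient'' without exhibiting them, and for $n=11$ you explicitly acknowledge that you do not have a working argument. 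Your worry there is well founded: since the expected $\field$-dimension of $I(4^{\times 11})_9$ is zero and Cremona preserves $\field$-dimension, no Cremona chain on the unmodified system can expose its emptiness, and gluing away a subconfiguration weakens the constraints and risks producing a nonzero auxiliary system. So the glue-plus-Cremona strategy is unlikely to close $n=11$ at all, and in any case you have not carried it out for the other three values either. The missing ingredient is exactly the postulation theorem for general quadruple points in $\PP^3$, which the paper relies on (and which it already uses in Proposition~\ref{case3}); without it your proof is incomplete.
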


\begin{proof}
Take $s$ satisfying
$$\binom{s}{3} < n \leq \binom{s+1}{3}.$$
Then $4 \leq s \leq 8$. For each $s\geq 5$, we will show that
$$I(4^{\times (\binom{s}{3}+1)})_{2s-1} = 0.$$
Observe that by \cite{Bram} and \cite{uzDum}, emptiness (ie. containing no nonzero form) of $I(4^{\times n})_{t}$ for $t \geq 9$ depends only on
its ``virtual dimension''
$$v = \binom{t+3}{3} - 20n.$$
In each case $v$ becomes negative which gives emptiness:
$$
\begin{array}{cccc}
s & 2s-1 & \binom{s}{3}+1 & v \\ \hline
8 & 15 & 57 & -325 \\
7 & 13 & 36 & -161 \\
6 & 11 & 21 & -57 \\
5 & 9 & 11 & -1
\end{array}
$$
Let $s=4$, $n \geq 7$. Use Cremona operations
$$
\begin{array}{c|cccccccccc|c}
t & m_1 & m_2 & m_3 & m_4 & m_5 & m_6 & m_7 & m_8 \ldots & k \\ \hline
7 & \un{4} & \un{4} & \un{4} & \un{4} & 4 & 4 & 4 & \ldots & -2 \\
5 & 2 & 2 & \un{2} & \un{2} & \un{4} & \un{4} & 4 & \ldots & -2 \\
3 & 2 & 2 & 0 & 0 & 2 & 2 & 4 & \ldots
\end{array}
$$
to show that $I(4^{\times n})_{7}$ is the zero ideal.
\end{proof}

\begin{lemma}
\label{singular}
Let $I$ be the ideal of $4$ or $5$ simple points in general position. Then
$$I^{(4)} \subset M^{2}I^{2}$$
Moreover, if $I$ is an ideal of $11$ simple points in general position, then
$$(I^{(7)})_{15} = (I^{(10)})_{21} = 0.$$
\end{lemma}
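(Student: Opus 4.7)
The lemma has two essentially independent parts: the containment $I^{(4)} \subset M^2 I^2$ for $n \in \{4, 5\}$, and the two graded-piece vanishings for $n = 11$. The plan is to handle them separately, with the first following the framework of Proposition \ref{gamma} and the second being a combination of Proposition \ref{glue} with explicit Cremona chains.

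For $n \in \{4, 5\}$ generic points in $\PP^3$ the ideal $I$ is generated in degree $2$ by its space of quadrics, and has regularity at most $3$ --- both tighter than the generic estimate $\reg(I) \leq s-1$ used inside Proposition \ref{gamma}. Combining this with $\gamma(4) \geq 4/3$ and $\gamma(5) \geq 5/3$ from Proposition \ref{vargamma} gives $\alpha(I^{(4)}) \geq 6$ for $n = 4$ and $\alpha(I^{(4)}) \geq 7$ for $n = 5$ via Proposition \ref{Wc}. Both values clear the two thresholds used inside the proof of Proposition \ref{gamma}: $\alpha(I^{(4)}) \geq 2 \reg(I)$ (giving $I^{(4)} \subset I^2$ via \cite[Lemma 2.3.4]{BoHa1}) and $\alpha(I^{(4)}) \geq 2t + 2$ with $t = 2$ (giving the refined $I^{(4)} \subset M^2 I^2$ via the argument of \cite[Lemma 4.6]{HaHu}); so the proof of Proposition \ref{gamma} applies verbatim.

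For the vanishings at $n = 11$, apply Proposition \ref{glue} with the split $11 = 4 + 7$ in each case. The smaller piece $I(7^{\times 4})_9 = 0$ (respectively $I(10^{\times 4})_{13} = 0$) follows immediately from $\gamma(4) \geq 4/3$ together with Proposition \ref{Wc}. It remains to verify emptiness of the two eight-multiplicity ideals $I(7^{\times 7}, 10)_{15}$ and $I(10^{\times 7}, 14)_{21}$. For the latter a clean Cremona chain works: iterate the operation of Cremona on the currently-largest multiplicity together with three of the next-smaller ones, producing
$$I(10^{\times 7},14)_{21} \longrightarrow I(12,8^{\times 3},10^{\times 4})_{19} \longrightarrow I(10,8^{\times 4},6^{\times 3})_{15} \longrightarrow I(8,6^{\times 4},4^{\times 3})_{11}$$
$$\longrightarrow I(6,4^{\times 4},2^{\times 3})_{7} \longrightarrow I(4,2^{\times 4},0^{\times 3})_{3} \longrightarrow I(0,(-2)^{\times 3},2,0^{\times 3})_{-1},$$
which has negative degree, hence is zero.

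The main obstacle is the parallel case $I(7^{\times 7}, 10)_{15} = 0$: the analogous chain stalls at an ideal such as $I(4^{\times 7}, 3)_8$, whose multiplicity sum $31$ is less than $4 \cdot 8 = 32$ (so Proposition \ref{cre8} applies in principle) but whose virtual dimension is positive, so a degree-based conclusion is not immediate. I would finish this by emulating the argument used in the proof of Proposition \ref{vargamma} for $n = 8$: view the stalled configuration as the value at $m = 1$ of a one-parameter family, track separately the $m$-linear part (which is invariant under the relevant Cremonas) and the free integer offset, and use Proposition \ref{cre8} to drive the offset sufficiently negative that the combined degree becomes negative. Alternatively, a further application of Proposition \ref{glue} within the residual eight-tuple, with bridge degrees adapted to the Cremona step, may produce a refined split in which every piece vanishes by a direct degree- or multiplicity-based count.
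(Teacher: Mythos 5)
The paper's own proof of Lemma \ref{singular} is almost entirely a computer calculation: for the containment part it passes to the fundamental points plus generic parameters and asks Singular to verify $J^{(4)}\subset M^2 J^2$ over the function field $\field(a,\dots,f)$, while for the $n=11$ vanishings it computes Gr\"obner bases of the intersection ideals over a finite field and reads off $\alpha$, invoking semicontinuity. Your approach is genuinely different --- a theoretical route through Proposition \ref{gamma} and through Proposition \ref{glue} with explicit Cremona chains --- and it actually succeeds for one of the three claims ($(I^{(10)})_{21}=0$, via the split $11=4+7$ and the chain you display, which I checked terminates at degree $-1$). But the other two claims are not established.

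For the containment part, your argument for $n=5$ hinges on two unproved assertions: that $\reg(I)\leq 3$ (this one is true and even matches $s-1$ from Proposition \ref{gamma}) and, crucially, that $I$ is \emph{generated in degree $2$}. Only the latter lets you take $t=2$ and clear the threshold $\alpha(I^{(4)})\geq 2t+2=6$ with the bound $\alpha(I^{(4)})\geq\lceil 20/3\rceil=7$; if one must fall back on the generic bound $t=s-1=3$ used in Proposition \ref{gamma}, the needed threshold is $8$, which $\gamma(5)\geq 5/3$ does not give (indeed $\alpha(I^{(4)})$ \emph{equals} $7$ for five general points, since the virtual dimension of $I(4^{\times 5})_{7}$ is $120-100=20>0$, so condition \eqref{ass2} of Proposition \ref{gamma} genuinely fails). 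Degree-$2$ generation of the ideal of five general points in $\PP^3$ is in fact true, but it is not a consequence of anything cited in the paper and you do not prove it; as written this is a gap. (I also note that the theorem's proof assigns the cases $n=5,6$ to this lemma, and the Singular code computes \texttt{I5} and \texttt{I6}; for $n=6$ the Waldschmidt route fails more decisively, so the computer step is not removable by your method.)

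The serious gap is $(I^{(7)})_{15}=0$. After the glue $11=4+7$, you must show $I(7^{\times 7},10)_{15}=0$. The Cremona chain reduces this to $I(4^{\times 7},3)_{8}$, which is Cremona-reduced (every choice of four multiplicities gives $k=0$) and has virtual dimension $\binom{11}{3}-7\binom{6}{3}-\binom{5}{3}=165-140-10=15>0$; by the De Volder--Laface classification \cite{Laf} cited in the paper, this system is therefore \emph{nonzero}, so $I(7^{\times 7},10)_{15}\neq 0$ and the glue with this split cannot close the argument. Your two proposed escape routes do not work: there is no one-parameter family in $m$ to separate into ``$m$-linear part plus free offset'' --- the configuration $(4^{\times 7},3;8)$ is a single fixed numeric system, not the $m=1$ value of anything Cremona-invariant --- and Proposition \ref{cre8} operates under $4t>\sum m_i$ (which holds, $32>31$) only to drive the degree \emph{up}, the wrong direction for showing emptiness. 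Other splits ($11=5+6$, $11=3+8$) run into the same phenomenon, because the residual systems end up Cremona-reduced with positive virtual dimension. For this claim one really does need the direct Gr\"obner computation of the paper (or some other non-Cremona certificate of emptiness for $I(7^{\times 11})_{15}$, whose virtual dimension $816-924=-108$ is negative but whose emptiness is a nontrivial ``general position'' assertion for eleven points).
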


\begin{proof}
First of all, we may use linear automorphism of $\PP^3$ to assume that $p_1 = (1:0:0:0)$,
$p_2=(0:1:0:0)$, $p_3=(0:0:1:0)$, $p_4=(0:0:0:1)$ (see \cite{Dum}).
We take general $p_5=(a:b:c:1)$ and $p_6=(d:e:f:1)$. Now we use thory of Gr\"obner bases and Singular \cite{sing} to compute that
$I^{(4)} \subset M^{2}I^{2}$ in the ring $\field(a,b,c,d,e,f)[\PP^3]$.
{\small
\begin{verbatim}
ring R=(0,a,b,c,d,e,f),(x,y,z,w),dp; option(redSB);
ideal P1=x,y,z;
ideal P2=x,y,w;
ideal P3=x,z,w;
ideal P4=y,z,w;
ideal P5=x-a*w,y-b*w,z-c*w;
ideal P6=x-d*w,y-e*w,y-f*w;
ideal M=x,y,z,w;
ideal I5=intersect(P1,P2,P3,P4,P5);
ideal I6=intersect(I5,P6);
ideal J5=intersect(P1^4,P2^4,P3^4,P4^4,P5^4);
ideal J6=intersect(J5,P6^4);
ideal K5=quotient(M^2*I5^2,J5);
ideal K6=quotient(M^2*I6^2,J6);
K5; K6;
K5[1]=1
K6[1]=1
\end{verbatim}
}
All the computations performed by Singular will remain valid for generic values of parameters $a$, $b$, $c$, $d$, $e$, $f$ in $\field[\PP^3]$, hence
for generic $p_5$ and $p_6$ the answer would be the same, proving containment.

To show that $(I^{(7)})_{15} = (I^{(10)})_{21} = 0$ we will find the element with the lowest degree in
$I^{(7)}$ (resp. $I^{(10)}$), which can be done by computing Gr\"obner basis of the ideal with respect
to some degree ordering.  By semicontinuity of $\alpha$, it is enough to compute the above for some chosen points, moreover,
it can be performed over $\ZZ/p\ZZ$ for some prime $p$, to make computations feasible (computations over $\ZZ$ has not finished within 24 hours).
{\small
\begin{verbatim}
ring S=32000,(x,y,z,w),dp;
ideal P1=x,y,z;
ideal P2=x,y,w;
ideal P3=x,z,w;
ideal P4=y,z,w;
ideal P5=x-2w,y-w,z-w;
ideal P6=x-3w,y-2w,z-3w;
ideal P7=x-5w,y-4w,z-7w;
ideal P8=x+w,y-2w,z-6w;
ideal P9=x+12w,y-14w,z-11w;
ideal P10=x-w,y+5w,z+8w;
ideal P11=x+3w,y+w,z+8w;
ideal J7=intersect(P1^7,P2^7,P3^7,P4^7,P5^7,P6^7,P7^7,P8^7,P9^7,P10^7,P11^7);
ideal J10=intersect(P1^10,P2^10,P3^10,P4^10,P5^10,P6^10,P7^10,P8^10,P9^10,P10^10,P11^10);
J7=std(J7); J10=std(J10);
deg(J7[1]); deg(J10[1]);
16
23
\end{verbatim}
}
\end{proof}

The case of $n \leq 4$ points will be solved using \cite[Proposition 15]{Dum}:

\begin{proposition}
\label{funda}
Let $1 \leq n \leq N+1$, let $p_1=(1:0:\ldots:0)$, $p_2=(0:1:0:\ldots:0)$, $\ldots$ for $j=1,\dots,n$ be fundamental
points in $\PP^N$, let $I = I(1^{\times n})$, let $J = \cap_{j=1}^{n} \mathfrak{m}_{p_j}$. Let $r \geq 1$.
If $J^{(Nr)} \subset M^{(N-1)r}J^{r}$ then $I^{(Nr)} \subset M^{(N-1)r}I^{r}$.
\end{proposition}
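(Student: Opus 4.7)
The plan is to reduce the generic configuration to the fundamental one by a projective linear change of coordinates, and then to verify that every operation appearing in the containment commutes with the induced graded automorphism of $\field[\PP^N]$.

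First I would fix $n$ points $q_1,\dots,q_n$ in general position in $\PP^N$ defining $I$. Since $n \leq N+1$, any homogeneous representatives of $q_1,\dots,q_n$ in $\field^{N+1}$ are linearly independent (this is exactly what general position means for so few points), and hence can be completed to a basis of $\field^{N+1}$. The inverse of the corresponding change-of-basis matrix gives an element $g \in \mathrm{GL}_{N+1}(\field)$ whose projective action sends $q_j$ to the fundamental point $p_j$ for $j=1,\dots,n$. Pulling back coordinates by $g$ produces a graded $\field$-algebra automorphism $\Phi$ of $\field[\PP^N]$.

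Next I would check that $\Phi$ intertwines the two containments. Since $\Phi$ is a graded automorphism, it fixes the unique homogeneous maximal ideal $M$. Because $\mathfrak{m}_{p_j}$ is intrinsically defined as the ideal of forms vanishing at $p_j$, we have $\Phi(\mathfrak{m}_{p_j}) = \mathfrak{m}_{q_j}$, and intersecting these yields $\Phi(J) = I$. A ring automorphism commutes with finite intersections, with the localisations used in the definition of symbolic powers, and with ordinary powers and ideal products, so $\Phi(J^{(Nr)}) = I^{(Nr)}$ and $\Phi\bigl(M^{(N-1)r} J^r\bigr) = M^{(N-1)r} I^r$. Applying $\Phi$ to the hypothesis $J^{(Nr)} \subset M^{(N-1)r} J^r$ therefore produces the desired containment $I^{(Nr)} \subset M^{(N-1)r} I^r$.

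The main obstacle is almost purely bookkeeping: one has to be careful that the linear algebra works uniformly in the range $1 \leq n \leq N+1$ (for $n < N+1$ one just ignores the extra fundamental points, noting that a general position assumption still gives linear independence of the affine representatives), and one has to keep track of the fact that $\Phi$ is a genuine automorphism of $\field[\PP^N]$, not merely of some localisation, so that it respects the ``$\field[\PP^N] \cap \cdots$'' in the definition of $I^{(m)}$. Neither point causes any real difficulty, which is why the reduction can be quoted verbatim from \cite{Dum}.
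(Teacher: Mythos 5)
Your proof is correct and is essentially the same reduction that Proposition 15 of \cite{Dum} (which the paper cites without reproducing) performs: for $n\le N+1$ points in general position one picks a linear change of coordinates taking the generic points to the fundamental ones, and the induced graded automorphism $\Phi$ of $\field[\PP^N]$ fixes $M$, sends $\mathfrak{m}_{p_j}$ to $\mathfrak{m}_{q_j}$, and therefore commutes with intersections, ordinary powers, products, and (by the fat-points description $I^{(m)}=\bigcap_j\mathfrak{m}_{q_j}^{m}$) with symbolic powers, so $\Phi$ carries the containment $J^{(Nr)}\subset M^{(N-1)r}J^r$ to $I^{(Nr)}\subset M^{(N-1)r}I^r$. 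Nothing is missing; your observation that $\Phi$ is an automorphism of the whole ring $\field[\PP^N]$, not just of some localisation, is precisely the bookkeeping point that makes the ``$\field[\PP^N]\cap\cdots$'' in the definition of the symbolic power transport correctly.
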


For a sequence $(a_0,\dots,a_N)$ of $N+1$ nonnegative integers let
$$x^{(a_0,\dots,a_N)} = x_0^{a_0} \cdot \ldots \cdot x_N^{a_N} \in \field[\PP^3].$$
Again, by \cite[Proposition 16]{Dum} we can describe generators of $I^{(m)}$: 

\begin{proposition}
\label{gener}
Let $I$ be an ideal of $n \leq N+1$ fundamental points, let $m \geq 1$. Then $(I^{(m)})_{t}$ is generated by the following set of monomials:
$$\mathcal{M}=\mathcal{M}(n,m)_t = \bigg\{ x^{(a_0,\dots, a_N)} : \sum_{j=0}^{N} a_{j} = t, \, a_k \leq t-m \text{ for } k=0,\dots,n-1 \bigg\}.$$
\end{proposition}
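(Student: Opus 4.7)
The plan is to reduce the proposition to a direct monomial computation, since every ideal in sight turns out to be a monomial ideal when the points are fundamental.

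First I would observe that each fundamental point $p_j$ has a monomial ideal $\mathfrak{m}_{p_j}$ generated by the $N$ variables \emph{other than} $x_{j-1}$. For example $\mathfrak{m}_{p_1}=(x_1,\dots,x_N)$, and so on. Consequently each power $\mathfrak{m}_{p_j}^m$ is a monomial ideal, and so is the finite intersection
\[
I^{(m)} = \bigcap_{j=1}^{n} \mathfrak{m}_{p_j}^{m},
\]
since an intersection of monomial ideals is again a monomial ideal. Therefore $(I^{(m)})_t$ is spanned by the monomials of degree $t$ that it contains, and it is enough to identify exactly which monomials these are.

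Next I would compute membership of a monomial $x^{(a_0,\dots,a_N)}$, with $\sum_k a_k=t$, in a single $\mathfrak{m}_{p_j}^{m}$. Since $\mathfrak{m}_{p_j}^m$ consists of polynomials whose every monomial has total degree at least $m$ in the variables distinct from $x_{j-1}$, the monomial $x^{(a_0,\dots,a_N)}$ lies in $\mathfrak{m}_{p_j}^m$ if and only if
\[
\sum_{k \neq j-1} a_k \;\geq\; m,
\]
which, using $\sum_k a_k = t$, is equivalent to $a_{j-1} \leq t-m$. Intersecting over $j=1,\dots,n$ gives exactly the set of conditions $a_k \leq t-m$ for $k=0,\dots,n-1$, matching the definition of $\mathcal{M}(n,m)_t$.

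The only place where care is needed is to be sure that the generators displayed really do generate, not merely span; but because $I^{(m)}$ is a monomial ideal, its graded piece in degree $t$ is the $\field$-span of the monomials of degree $t$ lying in it, and these monomials are automatically part of a generating set of $I^{(m)}$. The hypothesis $n \leq N+1$ is used only to keep the $p_j$ fundamental and the coordinates independent; no genericity argument is required. This reproduces \cite[Proposition 16]{Dum} and completes the sketch.
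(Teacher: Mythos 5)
Your argument is correct and is the natural one: for fundamental points each $\mathfrak{m}_{p_j}$ is a monomial ideal, hence so are its powers and their intersection $I^{(m)}$, and a monomial $x^{(a_0,\dots,a_N)}$ of degree $t$ lies in $\mathfrak{m}_{p_j}^m=(x_0,\dots,\widehat{x_{j-1}},\dots,x_N)^m$ exactly when $t-a_{j-1}\geq m$, which yields the stated description of $\mathcal{M}(n,m)_t$. The paper itself gives no proof here but merely cites \cite[Proposition 16]{Dum}, so there is no in-text argument to compare against; your reduction to a monomial membership test is what that reference does, and the observation that a graded piece of a monomial ideal is spanned by the monomials it contains closes the only potential gap.
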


\begin{proposition}
\label{case5}
Let $I$ be the ideal of at most four points in general position in $\PP^3$. Then $I^{(3r-2)} \subset M^{2r-2}I^{r}$ for all $r\geq 1$.
\end{proposition}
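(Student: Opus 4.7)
The plan is to reduce to the case of fundamental points via Proposition \ref{funda}, describe $J^{(3r-2)}$ combinatorially using Proposition \ref{gener}, and then solve the resulting monomial decomposition problem by a graph-theoretic construction.

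By Proposition \ref{funda} it suffices to prove $J^{(3r-2)} \subset M^{2r-2} J^r$ when $J$ is the ideal of $n \leq 4$ fundamental points in $\PP^3$. The case $r = 1$ is immediate since $J^{(1)} = J$. By Proposition \ref{gener}, every monomial in $(J^{(3r-2)})_t$ has the form $x^{(a_0, a_1, a_2, a_3)}$ with $\sum_{i} a_i = t$ and $\sum_{j \neq i} a_j \geq 3r - 2$ for each $i < n$. Since $M^{2r-2} J^r$ is also a monomial ideal, the task reduces to writing each such $x^a$ as $x^b \cdot x^c$, where $x^c$ is divisible by a product of $r$ minimal generators of $J$ and $\sum_i b_i \geq 2r - 2$.

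Focus on the decisive case $n = 4$: the six minimal generators of $J$ are the monomials $x_i x_j$ ($i < j$), and a product of $r$ of them is encoded by a loopless multigraph $G$ on the vertex set $\{0,1,2,3\}$ with $r$ edges. Such a product divides $x^a$ precisely when the degree sequence $(d_0, d_1, d_2, d_3)$ of $G$ satisfies $d_i \leq a_i$. By the standard realizability criterion for loopless multigraphs on four vertices, such a $G$ exists if and only if $\sum_i d_i = 2r$ and $\max_i d_i \leq r$. Hence I need integers $d_i \leq \min(a_i, r)$ summing to $2r$, which reduces to verifying $\sum_i \min(a_i, r) \geq 2r$. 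This follows by a short case analysis on how many of the $a_i$ exceed $r$, using the four hypotheses $\sum_{j \neq i} a_j \geq 3r - 2$ to force enough mass on the small coordinates. Given such $d$, set $c = d$ and $b = a - d$; summing the four hypotheses yields $3t \geq 4(3r - 2)$, hence $t \geq 4r - 2$ by integrality, so $\sum_i b_i = t - 2r \geq 2r - 2$ as required.

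For $n \leq 3$, some minimal generators of $J$ are the linear forms $x_k$ with $k \geq n$. The same plan works: first absorb a portion of $a$ using these linear generators (a factor $x_k^{j}$ accounts for one generator of $J$ per unit of $j$ at degree cost $1$ rather than $2$), and then run the multigraph argument on the remaining coordinates. In each case the two crucial inequalities --- realizability of the appropriate degree sequence and the slack bound $\sum_i b_i \geq 2r - 2$ --- follow directly from $\sum_{j \neq i} a_j \geq 3r - 2$. The main obstacle is simply the bookkeeping needed to handle the four values of $n$ uniformly; there are no deeper difficulties.
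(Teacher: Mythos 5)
Your proposal is correct and follows essentially the same strategy as the paper's own proof: describe $(J^{(3r-2)})_t$ via Proposition \ref{gener}, then factor each monomial as a product of $r$ generators of $J$ times a factor of degree at least $2r-2$, using the linear generators $x_j$, $j\geq n$, and the quadratic generators $x_ix_j$, $i<j<n$. Two small remarks. First, Proposition \ref{funda} is stated only for the exponent $Nr$, so it does not literally give the reduction you want for $3r-2$; the reduction is trivial anyway since $\leq 4$ points in general position in $\PP^3$ are projectively equivalent to the fundamental points (so the appeal to Proposition \ref{gener} is justified without Proposition \ref{funda}). Second, your explicit multigraph-realizability check, $\sum_i \min(a_i,r)\geq 2r$ rather than merely $\sum_i a_i \geq 2r$, is a genuine refinement: the paper only verifies the count $2(r-p)\leq s$ and then asserts $y\in I^r$ without addressing whether the $2(r-p)$ chosen indeterminates can actually be grouped into pairs $x_ix_j$ with $i\ne j$. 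Your treatment of $n\leq 3$ is only a sketch, but the same inequalities (in particular $\sum_{j\ne i}a_j\geq 3r-2$ together with $r\geq 2$) close those cases along the lines you indicate.
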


\begin{proof}
The proof is very similar to the proof of Proposition 18 in \cite{Dum}.
Assume $r \geq 2$ and $n \geq 2$ ($n$ denotes the number of points; for $n=1$ or $r=1$ the statement is clear).
Choose $t \geq 0$, by Proposition \ref{gener} it is sufficient to show that every element from $\mathcal{M}$ belongs to
$M^{2r-2}I^r$. Let $x^{(a_0,\dots,a_3)} \in \mathcal{M}$, of course $t \geq 3r-2$. Our aim is to show that
$$x^{(a_0,\dots,a_3)} = y \cdot z,$$
where $y$ is a product of $r$ monomials, each of them belonging to $I$,
and $z$ is a monomial of degree at least $2r-2$.
Observe that, by Proposition \ref{gener}, $x_j \in I$ for $j \geq n$, while for other indeterminates we have
$$x_jx_k \in I \text{ for } j < n, k < n.$$
Hence $y$ should be equal to the product of $r$ factors, each of them being either a single indeterminate
$x_j$ for $j \geq n$, or a product of two indeterminates $x_jx_k$ for $j,k < n$.
Let
$$s = \sum_{j=0}^{n-1} a_j, \qquad p = \sum_{j=n}^{3} a_j,$$
observe that $p+s=t$.
If $p \geq r$ then $y$ can be taken to be a product of exactly $r$ indeterminates, so $\deg y=r$.
Taking $z=x^{(a_0,\dots,a_3)}/y$ we obtain
$$\deg z = t-r \geq 3r-2-r = 2r-2,$$
hence $z \in M^{2r-2}$.
Now consider the case where $p<r$. Take $p$ single indeterminates of the form $x_j$ for $j \geq n$ and
$2(r-p)$ indetermines of the form $x_j$ for $j < n$ in such a way that their product $y$ divides $x^{(a_0,\dots,a_3)}$.
It is possible since
$$2(r-p)=2r-2p \leq t-r+2-2p = (t-p)-r+2-p \leq s.$$
Thus $y \in I^r$, let $z=x^{(a_0,\dots,a_3)}/y$, we have
$$\deg(z)=s-2(r-p).$$
Thus it suffices to show that
$$s-2(r-p) \geq 2r-2$$
which is equivalent to show that
$$t+p \geq 4r-2.$$
For $t \geq 4r-2$ this is obvious, so assume that $t < 4r-2$, ie. $t \leq 4r-3$. Since $x^{(a_0,\dots,a_3)} \in \mathcal M$ we have
$$a_j \leq t-(3r-2) \text{ for } j=0,\dots,n-1,$$
hence
$$ s \leq n(t-(3r-2)).$$
Therefore
$$p \geq t-n(t-(3r-2)),$$
hence
$$t+p \geq (2-n)t+3nr-2n.$$
But
$$(2-n)t \geq (2-n)(4r-3)$$
and thus
$$t+p \geq (2-n)(4r-3)+3nr-2n=(8-n)r+n-6 \geq 4r-2,$$
since $n=2$, $3$ or $4$.
\end{proof}

\end{document}